\newtheorem{theorem}{Theorem}
\newtheorem{proposition}[theorem]{Proposition}
\newtheorem{corollary}[theorem]{Corollary}
\newtheorem{lemma}[theorem]{Lemma}
\theoremstyle{definition}
\newtheorem{definition}[theorem]{Definition}
\newcommand{\defeq}{:=}
\newcommand{\IR}{\mathbb R}
\newcommand{\IZ}{\mathbb Z}
\newcommand{\IN}{\mathbb N}
\title{Locally compact strictly convex metric groups are abelian}
\author{Taras Banakh, Oles Mazurenko}
\email{t.o.banakh@gmail.com, oles.mazurenko@lnu.edu.ua}
\address{Ivan Franko National University of Lviv}
\keywords{Strictly convex metric space, normed space, abelian metric group}
\subjclass{20K45, 46B20, 52A07}
\begin{document}
\begin{abstract}
We show that every locally compact strictly convex metric group is abelian, thus answering one problem posed by the authors in their earlir paper. To prove this theorem we first construct the isomorphic embeddings of the real line into the strictly convex metric group using its geodesic properties and charaterization of the real line as a unique not monothetic one-parametric metrizable topological group. We proceed to show that all compact subgroups in a strictly convex metric group are trivial, which combined with the classical result of Iwasawa completes the proof of the main result. 
\end{abstract}
\maketitle

	\section{Introduction} 
	
The strict convexity is a fundamental geometric property of Banach spaces, ensuring the uniqueness of best approximations, playing a central role in duality theory, and underpinning applications in optimization, approximation theory, and fixed point theory (cf. \cite{Barbu}, \cite{BorLewis}, \cite{GK}, \cite{Ist}).
In spite of the fact that strict convexity is usually defined for normed or Banach spaces, it is a purely metric property and can be defined without involving the linear or convex structure. 

\begin{definition} A metric space $(X,d)$ is defined to be {\em strictly convex} if for any points $x,y\in X$ and any positive real numbers $a,b$ with $a+b=d(x,y)$, the intersection $B[x,a]\cap B[y,b]$ is a singleton.
\end{definition}

Here we denote by $B[x,a]\defeq\{z\in X:d(x,z)\le a\}$ the closed ball of radius $a$ centered at a point $x$ of the metric space $(X,d)$.

In the paper \cite{BM1} we proved that every strictly convex metric abelian group $G$ admits a unique multiplication $\cdot:\IR\times G\to G$ turning $G$ into a normed space over the field of real numbers, and asked whether every strictly convex metric group is abelian. In this paper we give a partial answer to this problem proving that a strictly convex metric group is abelian whenever it is locally compact or finite-dimensional.

\section{The main result}

Our study mainly considers the strict convexity in metric groups. Let us recall the definition of this mathematical structure.
	\begin{definition}
		A \emph{group} is an algebraic structure $(G, +, 0)$, consisting of a set $G$, a binary operation $+: G \times G \to G$  and an identity element $0$, satisfying the following axioms:
		\begin{enumerate}
			\item $\forall x,y,z\in G\;\;(x + y)+z = x+(y+z)$, \hfill (associativity)
			\item $\forall x\in G\;\;x+0 = x=0+x$, \hfill (identity)
			\item $\forall x\in G\;\exists y\in G\;\;x+y=0=y+x$. \hfill (inverse)
		\end{enumerate} 
		If, in addition, $x+y = y+x$ for all $x,y \in G$, then $(G,+, 0)$ is called an \emph{abelian group}.
	\end{definition}
	\begin{definition}
		A \emph{metric group} is a group $(G, +, 0)$ equipped with a metric $d: G \times G \to \mathbb{R}$, which is translation invariant in the sense that $d(x + c, y + c) = d(x,y)=d(c+x,c+y)$ for all points $x, y, c \in G$. The metric $d$ can be uniquely recovered from the {\em norm} $$\|\cdot\|:G\to\IR,\quad \|\cdot\|:x\mapsto \|x\|\defeq d(0,x),$$
		generated by this metric.
	\end{definition}
	\begin{definition}
		A topological space $X$ is called {\em locally compact} if for every $x \in X$ there exists an open neighborhood $U$ of $x$ such that the closure $\overline{U}$ is compact.
	\end{definition}

	The main result of this paper is the following theorem.
	\begin{theorem}\label{BMT:1}
		Every locally compact strictly convex metric group is abelian.
	\end{theorem}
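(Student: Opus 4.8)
The plan is to exploit that translation invariance makes the metric \emph{bi-invariant}, so the norm is conjugation-invariant and the inversion $x\mapsto -x$ is an isometry; consequently, for each $x$ the reflection $z\mapsto x+(-z)$ is an isometry exchanging $0$ and $x$. First I would extract from strict convexity a canonical halving: applying the definition with $a=b=\tfrac12\|x\|$ yields a unique point $m\in B[0,a]\cap B[x,b]$, and since the reflection above maps this singleton intersection to itself it must fix $m$, which forces $m+m=x$. Thus every element has a canonical half, the space is uniquely geodesic, and in particular $G$ is connected.

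Iterating the halving produces, for each $x\neq 0$, a homomorphism from the dyadic rationals $\IZ[\tfrac12]$ into $G$ with $d(0,r\cdot x)=r\|x\|$ for $r\in[0,1]$; using that a locally compact bi-invariant metric group is complete, I would extend this continuously to a one-parametric subgroup $\gamma_x\colon\IR\to G$. Here I would invoke the characterization of $\IR$ as the unique non-monothetic one-parametric metrizable topological group applied to the closure $H=\overline{\gamma_x(\IR)}$: as a connected one-parametric metrizable group, $H$ is either topologically isomorphic to $\IR$ or monothetic, and a connected monothetic locally compact group is compact. So each nontrivial one-parametric subgroup is either a line isomorphic to $\IR$ or has compact closure.

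The key step is to show that $G$ admits no nontrivial compact subgroup $K$. My plan is a circumcenter (Chebyshev-center) fixed-point argument: since $G$ is locally compact, complete and geodesic it is proper, so the bounded set $K$ has a center minimizing $x\mapsto\max_{k\in K}d(x,k)$, and strict convexity forces this center $c$ to be \emph{unique}. Every left translation $L_k\colon z\mapsto k+z$ with $k\in K$ is an isometry carrying $K$ onto $K$, hence it fixes the unique center, giving $k+c=c$ and therefore $k=0$; thus $K=\{0\}$. Combined with the previous paragraph this upgrades the dichotomy: the compact alternative for $H$ is now excluded (it would be a nontrivial compact subgroup), so every nontrivial one-parametric subgroup is a genuine line $\cong\IR$. (Alternatively, once the lines are in hand, a nonzero $x$ lying in a compact $K$ would satisfy $\|n\cdot x\|=|n|\,\|x\|\to\infty$, contradicting boundedness.)

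Finally I would assemble the result. The group $G$ is connected, locally compact, and carries a bi-invariant metric, hence is a group with small invariant neighborhoods; by the structure theorem of Iwasawa such a group is topologically isomorphic to $\IR^n\times K$ with $K$ a compact subgroup. Since all compact subgroups are trivial, $K=\{0\}$ and $G\cong\IR^n$ as a topological group, which is abelian. I expect the two main obstacles to be: (i) making the passage from the metric midpoint to the algebraic half fully rigorous and controlling the extension of the dyadic homomorphism so that the geodesic line structure is global rather than merely local; and (ii) the precise use of structure theory, namely checking that strict convexity together with local compactness supplies exactly the inputs needed (properness and uniqueness of the circumcenter on the geometric side, and the SIN/bi-invariance hypothesis on the topological side), and that Iwasawa's decomposition splits $G$ as a topological \emph{group}, not merely as a topological space.
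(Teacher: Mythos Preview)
Your overall plan matches the paper's architecture: extract canonical halves from strict convexity, build one-parametric subgroups, show that every compact subgroup is trivial, and finish with Iwasawa's theorem (the paper uses the compact-by-abelian formulation rather than an $\IR^n\times K$ splitting, but this is cosmetic). Your halving via the reflection $z\mapsto x-z$ is exactly the content of Lemma~\ref{BML:3}, and the one-parametric characterization of $\IR$ is the same tool the paper uses in Corollary~\ref{BMC:2} (though the paper applies it to $\IR x$ itself, not to its closure; the closure need not be one-parametric in the sense required).

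The substantive gap is your circumcenter argument. The step ``strict convexity forces the center $c$ to be unique'' is not justified: the paper's definition of strict convexity gives only unique betweenness points (equivalently, unique geodesic segments), and this does \emph{not} imply that closed balls are geodesically convex or that $z\mapsto d(z,k)$ is convex along geodesics --- the standard hypotheses (Busemann or CAT(0) convexity) under which Chebyshev centers are unique. Without that, two distinct minimizers of $z\mapsto\max_{k\in K}d(z,k)$ cannot be ruled out, and the fixed-point step collapses. Your parenthetical ``alternative'' is essentially the paper's actual argument, but in your ordering it is circular: you need the lines $\IR x\cong\IR$ to conclude $\|n\cdot x\|\to\infty$, yet in your scheme the lines are obtained only \emph{after} the compact case has been excluded. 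The paper breaks this circularity by proving directly, \emph{before} knowing anything about compact subgroups, that $\IR x$ is never monothetic (Proposition~\ref{BMP:5}, via the generator analysis of Lemmas~\ref{BML:10}--\ref{BML:12}); hence $\IR x\cong\IR$ is closed in $G$, and any nonzero $x\in K$ would make $\IZ x\cong\IZ$ a closed and therefore compact subset of $K$, a contradiction (Corollary~\ref{BMC:4}). That non-monotheticity argument is the missing ingredient your plan would need if the Chebyshev-center route cannot be salvaged.
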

	Combined with the main result in \cite{BM1}, this implies
	\begin{corollary}
		Every locally compact strictly convex metric group is a finite-dimensional normed space over the field of real numbers.
	\end{corollary}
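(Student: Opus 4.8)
The plan is to derive the corollary from Theorem~\ref{BMT:1} together with the main result of \cite{BM1} and the classical Riesz characterization of finite dimensionality. Once Theorem~\ref{BMT:1} tells us that a locally compact strictly convex metric group $G$ is abelian, the main theorem of \cite{BM1} supplies a unique scalar multiplication $\cdot\colon\IR\times G\to G$ making $(G,\|\cdot\|)$ a normed space over $\IR$. A normed space whose norm topology is locally compact is finite-dimensional by Riesz's theorem, and local compactness of $G$ is exactly our hypothesis; hence $G$ is a finite-dimensional normed space. Thus the whole weight of the argument rests on Theorem~\ref{BMT:1}, and I describe how I would prove that.

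First I would exploit strict convexity to produce geodesics. For $a+b=d(x,y)$ the singleton $B[x,a]\cap B[y,b]$ is the point dividing $x,y$ in the ratio $a:b$; taking $a=b$ gives a unique midpoint, and iterating the midpoint construction yields, after completion, a unique geodesic joining any two points. Local compactness makes the bi-invariant metric complete, so these geodesics extend to full geodesic lines. Fixing $g\neq 0$, I would look at the geodesic line $L$ through $0$ and $g$. Translation invariance forces the midpoint operator to commute with left and right translations, and a halving/doubling argument then matches the dyadic points of $L$ with the dyadic multiples of $g$; passing to the closure, $L$ becomes a closed connected metrizable one-parameter subgroup of $G$. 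Here I would invoke the characterization of $\IR$ as the unique non-monothetic one-parameter metrizable topological group: a strictly convex metric group contains no isometric circle (on a circle the two midpoints of a pair of antipodal points already violate strict convexity), so the one-parameter group $L$ is not the monothetic circle and is therefore isomorphic to $(\IR,+)$, with $\|ng\|=|n|\,\|g\|$ for all $n\in\IZ$.

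With the lines in hand the remaining structural steps are comparatively formal. Every nonzero $g$ lies on such a line, so $\{ng:n\in\IZ\}$ is unbounded and hence not precompact; consequently no nontrivial element of $G$ lies in a compact subgroup, i.e.\ all compact subgroups of $G$ are trivial. Next, bi-invariance of $d$ makes each inner automorphism an isometry, since $\|g+x-g\|=\|x\|$ follows by translating twice; thus the balls $B[0,\e]$ form a conjugation-invariant neighbourhood base and $G$ is a connected (being geodesic) locally compact group with small invariant neighbourhoods. The classical structure theorem of Iwasawa for such groups then gives a topological isomorphism $G\cong\IR^n\times K$ with $K$ compact; as $K$ is a compact subgroup it must be trivial, whence $G\cong\IR^n$ is abelian. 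This proves Theorem~\ref{BMT:1} and, with the reduction above, the corollary.

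The main obstacle I anticipate is the middle step: upgrading the purely metric geodesic line $L$ to a genuine one-parameter subgroup isomorphic to $\IR$. One must show that the dyadic midpoint structure of $L$ is compatible with the group operation — for instance that $2\cdot\mathrm{mid}(0,g)=g$, and more generally that addition along $L$ matches the arclength parametrisation — without the luxury of commutativity, relying instead on the interplay of left and right translation invariance together with the uniqueness of geodesics. The characterization of $\IR$ is precisely the tool that converts these local metric identities into the global algebraic isomorphism $L\cong\IR$ while simultaneously excluding the compact (circle) alternative. By contrast, the triviality of compact subgroups, the reduction to $\IR^n$ via Iwasawa's theorem, and the final passage to finite dimensionality should be routine once the lines are available.
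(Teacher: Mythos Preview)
Your derivation of the corollary from Theorem~\ref{BMT:1} is exactly the paper's: abelianity from Theorem~\ref{BMT:1}, normed-space structure from \cite{BM1}, and finite dimension from Riesz. Your outline of Theorem~\ref{BMT:1} itself also tracks the paper closely---geodesic lines upgraded to one-parameter subgroups, the characterization of $\IR$ as the unique non-monothetic one-parameter metrizable group, triviality of compact subgroups, and Iwasawa's theorem (you invoke the $\IR^n\times K$ structure form rather than the compact-by-abelian form the paper cites, but either suffices once compact subgroups are known to be trivial).

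The one genuine gap is your argument for non-monotheticity. Ruling out an isometrically embedded circle via the two-midpoints observation does \emph{not} exclude all monothetic one-parameter metrizable groups: a dense irrational winding on a torus, taken with the subspace topology, is a monothetic one-parameter metrizable group that is not a circle, and nothing in your sketch prevents $\IR x$ from sitting inside $G$ that way. The paper handles this point with real care (Lemmas~\ref{BML:10}--\ref{BML:12} and Proposition~\ref{BMP:5}): assuming $\IR x$ monothetic, it uses the density of the set of topological generators to locate one with $\|tx\|<|t|\,\|x\|$, and from this manufactures two \emph{distinct} $2^n$-th roots of $tx$ inside the abelian closure $\overline{H_{tx}}$, contradicting the absence of $2$-torsion established earlier. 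You correctly flag this middle step as the main obstacle, but the specific resolution you propose (``no isometric circle, hence not monothetic'') does not close it; the norm-growth and unique-$2$-divisibility argument of the paper is what is actually needed.
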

	Theorem~\ref{BMT:1} will be proved in Section~\ref{s:proof-main} after some preliminary work made in Sections~\ref{s3}--\ref{s6}.	
	
	

\section{Multiplication in strictly convex metric groups}\label{s3}
In this section, we shall introduce the {\em metric multiplication} and the {\em algebraic multiplication} in a  strictly convex metric group $(G, +, 0, d)$, investigate their properties and interplay.
 
\begin{definition}
	Let $(X,d), (Y, \rho)$ be metric spaces. A map $f: X \to Y$ is an \emph{isometry} if \linebreak $\rho(f(x), f(y)) = d(x,y)$ for all points $x, y \in X$.
\end{definition}
\begin{definition}
	A metric space $(X,d)$ is \emph{geodesic} if for all $x,y \in X$ there exists a unique isometry $\gamma: [0, d(x,y)] \subseteq \mathbb{R} \to X$ such that $\gamma(0) = x$ and $\gamma(d(x,y)) = y$.
\end{definition}
It is known \cite{BM1} that strictly convex metric spaces are geodesic. For every $x \in G$ let \linebreak $\gamma: [0, \|x\|] \to G$ be the unique isometry with $\gamma(0) = 0$ and $\gamma(\|x\|) = x.$ For every $t \in [0,1] \subseteq \mathbb{R}$ put $t*x := \gamma(t\|x\|).$ The defined binary operation $*:[0,1]\times G\to G$, $*:(t,x)\mapsto t*x$ will be called the {\em metric multiplication}. Witness some properties of this operation introduced in the following Lemmas.
\begin{lemma}\label{BML:1}
	For all $t,v\in [0,1]$ and $x\in G$ we have $\|t*x-v*x\|=d(t*x,v*x)=|t-v|\cdot\|x\|$.
\end{lemma}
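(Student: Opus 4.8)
The plan is to reduce both equalities to direct consequences of the definitions, so the argument will be short. The first equality, $\|t*x-v*x\|=d(t*x,v*x)$, is a formal consequence of translation invariance of the metric together with the definition of the norm. First I would apply the translation by $-(v*x)$ to obtain $d(t*x,v*x)=d\big(t*x-v*x,\,0\big)$, and then recognize the right-hand side as $\|t*x-v*x\|$ by the very definition $\|y\|\defeq d(0,y)$. No strict convexity is needed for this part.

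For the substantive equality $d(t*x,v*x)=|t-v|\cdot\|x\|$, the key point is that both $t*x$ and $v*x$ lie on the single geodesic $\gamma:[0,\|x\|]\to G$ joining $0$ to $x$. Unfolding the definition of the metric multiplication, I have $t*x=\gamma(t\|x\|)$ and $v*x=\gamma(v\|x\|)$. Since $t,v\in[0,1]$, the arguments $t\|x\|$ and $v\|x\|$ both belong to the domain $[0,\|x\|]$ of $\gamma$, so these points are well defined. Because $\gamma$ is an isometry from the interval $[0,\|x\|]$, equipped with the usual metric inherited from $\IR$, into $(G,d)$, it preserves distances; hence $d(t*x,v*x)=d\big(\gamma(t\|x\|),\gamma(v\|x\|)\big)=\big|\,t\|x\|-v\|x\|\,\big|=|t-v|\cdot\|x\|$, as required.

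I do not expect a genuine obstacle here: the statement follows essentially immediately from the geodesic description of $*$ and the isometry property of $\gamma$ guaranteed by strict convexity (via the fact, cited earlier, that strictly convex metric spaces are geodesic). The only point deserving a line of care is the degenerate case $x=0$, where $\|x\|=0$ and the domain of $\gamma$ collapses to the singleton $\{0\}$; there $t*x=v*x=0$ for all $t,v$, so both sides of the identity equal $0$ and the conclusion holds trivially.
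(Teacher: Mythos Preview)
Your proof is correct and follows exactly the same approach as the paper: both arguments simply unfold the definition $t*x=\gamma(t\|x\|)$ and use that $\gamma$ is an isometry to obtain $d(t*x,v*x)=|t\|x\|-v\|x\||=|t-v|\cdot\|x\|$. Your additional remarks on translation invariance for the first equality and on the degenerate case $x=0$ are fine elaborations but not essential.
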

\begin{proof}
	Since $\gamma: [0,\|x\|] \to G$ is an isometry, we obtain 
	$$\|t*x - v*x\| = d(t*x, v*x) = d(\gamma(t\|x\|), \gamma(v\|x\|)) = |t\|x\|-v\|x\|| = |t-v| \cdot \|x\|.$$
\end{proof}
\begin{corollary}\label{BMC:1}
	For all $t \in [0,1]$ and $x \in G$ we have $\|t *x\| = t\|x\|.$
\end{corollary}

\begin{lemma}\label{BML:2}
	For all $t,v \in [0,1]$ and $x \in G$ we have $t * (v * x) = (tv)*x$.
\end{lemma}
\begin{proof}
	For the element $t*(v*x)$ we have $\|t*(v*x)\| = t\|v*x\| = tv\|x\|$ and 
	$$\|t*(v*x) -  v*x\| = (1-t) \|v*x\| = (1-t)v \|x\|,$$
	by Corollary \ref{BMC:1} and Lemma \ref{BML:1}.
	Similarly, for the element $(tv)*x$ we have $\|(tv)*x\| = tv\|x\|$ and 
	$$\|(tv)*x - v*x\| = (v-tv) \|x\| = (1-t)v \|x\|.$$
	Since $tv \|x\| + (1-t)v \|x\| = v \|x\| = \|vx\| = d(0, vx)$, the element $t * (v * x)$ coincides with $(tv)*x$ by the strict convexity of the metric space $(G,d)$.
\end{proof}
\begin{lemma}\label{BML:3}
	For all $x \in G$ we have $\frac{1}{2} * x + \frac{1}{2} * x = x.$
\end{lemma}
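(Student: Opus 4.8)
The plan is to prove that $\tfrac12 * x + \tfrac12 * x = x$ by showing that the point $y := \tfrac12 * x$ is simultaneously the unique midpoint of the geodesic from $0$ to $x$ and (after one translation) the midpoint of the geodesic from $0$ to $x$ realized through $y$, so that $y + y$ must equal $x$ by a uniqueness argument via strict convexity. Concretely, let $\gamma:[0,\|x\|]\to G$ be the geodesic with $\gamma(0)=0$, $\gamma(\|x\|)=x$, so that $y=\gamma(\tfrac12\|x\|)$. By Corollary~\ref{BMC:1} we have $\|y\|=\tfrac12\|x\|$, and by Lemma~\ref{BML:1} we have $\|y - x\| = d(y,x) = \tfrac12\|x\|$ as well.

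The core idea is to translate the second half of the geodesic to the origin. First I would observe that $z := y + y$ satisfies $\|z - y\| = \|y\|$ by translation invariance of the norm, since $\|z-y\|=\|(y+y)-y\|=\|y\|=\tfrac12\|x\|$. I would then argue that $z$ lies on the geodesic from $0$ to $x$ at parameter $\|x\|$; equivalently, that $z=x$. The natural route is to show that $z$ is a point with $\|z\| = \|x\|$ and $\|z - y\| = \tfrac12\|x\|$, and that $x$ is the \emph{only} such point, invoking strict convexity. For the norm computation, note $\|z\| = \|y+y\|$; translation invariance gives $\|y+y\| = d(0, y+y) = d(-y, y)$, and I would relate $d(-y,y)$ to $d(y,0)+d(0,\ldots)$ — but this is exactly where care is needed, because additivity of distances along the translated geodesic is what must be established.

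The cleanest formulation is the following. Consider the two balls $B[0,\tfrac12\|x\|]$ and $B[z,\tfrac12\|x\|]$ where $z=y+y$. Since $d(0,z)=\|y+y\|$, if I can show $\|y+y\|=\|x\|$ then $\tfrac12\|x\|+\tfrac12\|x\|=d(0,z)$, and strict convexity forces $B[0,\tfrac12\|x\|]\cap B[z,\tfrac12\|x\|]$ to be a singleton. The point $y$ lies in this intersection, since $\|y\|=\tfrac12\|x\|$ and $\|y-z\|=\|y-(y+y)\|=\|-y\|=\tfrac12\|x\|$ (using that the norm is symmetric, $\|-y\|=d(0,-y)=d(y,0)=\|y\|$). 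By the same reasoning applied to the original geodesic, $y$ is the unique midpoint of $0$ and $x$, so identifying $z$ with $x$ will follow once both midpoints are shown to coincide with $y$.

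The main obstacle, and the step I expect to require real work, is establishing $\|y+y\|=\|x\|$, i.e.\ $d(0,y+y)=d(0,x)$. This is the crux because it is precisely the additivity $d(0,y)+d(y,y+y)=d(0,y+y)$ along the translated second segment, which does not come for free from the geodesic structure of the single geodesic $\gamma$. I would attack it by translating: the segment $t\mapsto y + (t*x)$ for $t\in[0,1]$ is an isometric copy of the geodesic from $0$ to $x$ (by translation invariance), hence a geodesic from $y$ to $y+x$; evaluating at $t=\tfrac12$ shows $y+y=y+(\tfrac12*x)$ lies at distance $\tfrac12\|x\|$ from both $y$ and $y+x$, and strict convexity applied to the triangle $0,\,y,\,y+x$ (using that $y$ is the midpoint of both $\gamma$ and the translated geodesic) should pin down $y+y$ as the common value $x$. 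Verifying that these midpoints genuinely coincide, via the singleton property of the relevant ball intersection, is the delicate point on which the whole argument turns.
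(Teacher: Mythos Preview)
Your plan has the right ingredients but misidentifies the crux and stumbles at the end. You fixate on first proving $\|y+y\|=\|x\|$ and only then invoking strict convexity around the pair $(0,y+y)$; this is a detour, and your attempted resolution contains an error: in the translated geodesic $t\mapsto y+(t*x)$ from $y$ to $y+x$, the midpoint is $y+y$, not $y$, so the phrase ``$y$ is the midpoint of both $\gamma$ and the translated geodesic'' is wrong and the argument as written does not close. The clean way to finish \emph{your} approach is to apply strict convexity around the pair $(y,\,y+x)$ instead: one has $d(y,y+x)=d(0,x)=\|x\|$, and both $x$ and $y+y$ lie in $B[y,\tfrac12\|x\|]\cap B[y+x,\tfrac12\|x\|]$ (the four required distance checks are immediate from the geodesic and from left/right translation invariance, e.g.\ $d(y+x,x)=d(y,0)$ and $d(y+x,y+y)=d(x,y)$). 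Strict convexity then forces $y+y=x$ directly, with no need to compute $\|y+y\|$ beforehand.

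The paper takes the dual and slightly shorter route: rather than pushing $y$ forward to $y+y$ and comparing with $x$, it pulls $x$ back to $z:=x-y$ and compares with $y$. One checks $\|z\|=d(y,x)=\tfrac12\|x\|$ (right translation) and $d(x,z)=d(0,-y)=\tfrac12\|x\|$ (left translation), so both $y$ and $z$ lie in $B[0,\tfrac12\|x\|]\cap B[x,\tfrac12\|x\|]$; strict convexity gives $y=x-y$, i.e.\ $y+y=x$. Same mechanism, but it stays on the original segment $[0,x]$ and avoids introducing the auxiliary point $y+x$.
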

\begin{proof}
	Let $c := ||\frac{1}{2} * x\| = \frac{1}{2}\|x\| = \|x - \frac{1}{2} * x \| $ by Lemma \ref{BML:1} and Corollary \ref{BMC:1}.
	Define $z := x - \frac{1}{2} * x \in G$. Then $\|z\| = c$
	and 
	$$\|x - z\| = d(x, z)= d(x, x - \tfrac{1}{2} * x) = d(0, - \tfrac{1}{2} * x) = \|\tfrac{1}{2} * x\| = c,$$
	by the translation invariance of the metric. Hence, by the strict convexity of the metric space $(G, d)$, it follows that $\frac{1}{2} * x = z = x - \frac{1}{2} * x$, which implies $\frac{1}{2} * x + \frac{1}{2} * x = x.$
\end{proof}

Let us define the operation $\cdot: \IZ \times G \to G$, $\cdot: (n, x) \mapsto n \cdot x$ by the recursive formulas: $0\cdot x=0$, $(n+1)\cdot x=n\cdot x+x$, and $-(n+1)\cdot x = -n \cdot x - x$ for all $n\in\IN\cup\{0\}$. We will call this operation the \emph{algebraic multiplication}.

Let $H = \{\frac{1}{2^n}: n \in \mathbb{N} \cup \{0\}\}$. We shall prove that the set $H * x = \{\frac{1}{2^n} * x: n \in \IN \cup \{0\}\}$ is commutative for all $x \in G$. This will be done using the following fact for the defined multiplication operations.
\begin{lemma}\label{BML:4}
	For all $n,m \in \IN, n \leq m$ and $x \in G$ we have $2^n \cdot (\frac{1}{2^m} * x) = \frac{1}{2^{m-n}} * x.$
\end{lemma}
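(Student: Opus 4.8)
The plan is to argue by induction on $n$, with the geometric content entering only through the base case. Before starting, I would record the two elementary facts about the algebraic multiplication that hold in any (not necessarily abelian) group, namely $(k+\ell)\cdot y = k\cdot y + \ell\cdot y$ and $(k\ell)\cdot y = k\cdot(\ell\cdot y)$ for $k,\ell\in\IN\cup\{0\}$; both follow directly from the recursive definition of $\cdot$ together with associativity of $+$, since only a single fixed element $y$ is being added to itself. In particular I will use $2^{n+1}\cdot y = 2\cdot(2^n\cdot y)$ and $2\cdot z = z+z$.

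For the base case $n=1$ I would fix $m\in\IN$ and set $y := \tfrac{1}{2^{m-1}}*x$. By Lemma~\ref{BML:2},
$$\tfrac12 * y = \tfrac12 * \big(\tfrac{1}{2^{m-1}}*x\big) = \big(\tfrac12\cdot\tfrac{1}{2^{m-1}}\big)*x = \tfrac{1}{2^m}*x.$$
Applying Lemma~\ref{BML:3} to the element $y$ gives $\tfrac12*y + \tfrac12*y = y$, and since the left-hand side equals $2\cdot(\tfrac12*y)$ by definition of the algebraic multiplication, we obtain $2\cdot(\tfrac{1}{2^m}*x) = y = \tfrac{1}{2^{m-1}}*x$, which is exactly the claim for $n=1$.

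For the inductive step I would assume the identity for some $n$ with $n+1\le m$ and compute
$$2^{n+1}\cdot\big(\tfrac{1}{2^m}*x\big) = 2\cdot\Big(2^n\cdot\big(\tfrac{1}{2^m}*x\big)\Big) = 2\cdot\big(\tfrac{1}{2^{m-n}}*x\big) = \tfrac{1}{2^{m-n-1}}*x = \tfrac{1}{2^{m-(n+1)}}*x,$$
where the first equality is the algebraic identity $2^{n+1}=2\cdot 2^n$, the second is the inductive hypothesis, and the third is the base case applied with $m-n\ge 1$ in place of $m$ (legitimate since $n+1\le m$ forces $m-n\ge1$). This closes the induction.

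I do not expect a serious obstacle: all of the strict-convexity content is already packaged in Lemma~\ref{BML:3}, while Lemma~\ref{BML:2} collapses a nested metric multiplication into a single one. The only points requiring care are keeping the algebraic multiplication (integer scaling by repeated addition) conceptually separate from the metric multiplication (geodesic scaling by $[0,1]$), justifying the power laws for the algebraic multiplication \emph{without} assuming commutativity (which is fine, as they involve only repeated addition of a single element), and tracking the index $m-n\ge1$ so that the base case stays applicable throughout the induction.
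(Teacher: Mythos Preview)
Your proof is correct and follows essentially the same route as the paper's: induction on $n$, with the base case obtained by combining Lemma~\ref{BML:2} (to collapse the nested metric multiplication) with Lemma~\ref{BML:3} (to turn $\tfrac12*y+\tfrac12*y=y$ into $2\cdot(\tfrac{1}{2^m}*x)=\tfrac{1}{2^{m-1}}*x$), and the inductive step using $2^{n+1}\cdot y=2\cdot(2^n\cdot y)$ together with the base case at the shifted index $m-n$. Your version is merely more explicit about the purely group-theoretic power laws for $\cdot$ and about the index condition $m-n\ge1$, but there is no substantive difference.
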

\begin{proof}
	Fix $m \in \mathbb{N}$.We prove the claim by induction on $n \leq m$. If $n = 1$, then by Lemma \ref{BML:2} we have $\frac{1}{2} * (\frac{1}{2^{m-1}} * x) = \frac{1}{2^m} * x$. This implies $2 \cdot (\frac{1}{2^m} * x) = \frac{1}{2^{m-1}} * x$ by Lemma \ref{BML:3}. Assume the statement holds for some $k \in \mathbb{N}$ with $k < m$, that is $2^k \cdot (\frac{1}{2^m} * x) = \frac{1}{2^{m-k}} * x$. Then, using the base case and the induction hypothesis, we obtain $2^{k+1} \cdot (\frac{1}{2^m} * x) = 2 \cdot (\frac{1}{2^{m-k}} * x) = \frac{1}{2^{m-k-1}} * x$, which completes the induction.
\end{proof}

\begin{proposition}
	For all $x \in G$ the subset $H * x = \{\frac{1}{2^n} * x: n \in \mathbb{N} \cup \{0\}\}$ is commutative.
\end{proposition}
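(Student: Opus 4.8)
The plan is to reduce the commutativity of the whole set $H*x$ to the elementary fact that integer multiples of a single fixed element always commute. The bridge is Lemma~\ref{BML:4}, which rewrites each dyadic rescaling $\frac{1}{2^j}*x$ as an \emph{integer} multiple of the single finest element $\frac{1}{2^m}*x$, so that any two members of $H*x$ can be placed inside one cyclic subgroup, where commutativity is automatic.

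Concretely, I would take two arbitrary elements $\frac{1}{2^n}*x$ and $\frac{1}{2^{n'}}*x$ of $H*x$ with $n,n'\in\IN\cup\{0\}$, set $m\defeq\max\{n,n'\}$, and put $y\defeq\frac{1}{2^m}*x$. The key claim is that for every $j\in\{0,1,\dots,m\}$ one has $\frac{1}{2^j}*x=2^{m-j}\cdot y$. For $j<m$ this is exactly Lemma~\ref{BML:4} applied with the pair of exponents $m-j\le m$ (both lying in $\IN$), giving $2^{m-j}\cdot\big(\frac{1}{2^m}*x\big)=\frac{1}{2^{m-(m-j)}}*x=\frac{1}{2^j}*x$; for the boundary case $j=m$ the identity is the trivial $\frac{1}{2^m}*x=y=2^0\cdot y$. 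In particular, taking $j=n$ and $j=n'$ yields $\frac{1}{2^n}*x=2^{m-n}\cdot y$ and $\frac{1}{2^{n'}}*x=2^{m-n'}\cdot y$, so both chosen elements are integer multiples of the common element $y$.

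It then remains to observe that integer multiples of a fixed group element commute. From the recursive definition of the algebraic multiplication one obtains, by a short induction using only associativity of $+$, the additivity law $k\cdot y+l\cdot y=(k+l)\cdot y$ for all $k,l\in\IZ$; combined with $k+l=l+k$ in $\IZ$ this gives $k\cdot y+l\cdot y=l\cdot y+k\cdot y$. Applying this with $k=2^{m-n}$ and $l=2^{m-n'}$ yields $\frac{1}{2^n}*x+\frac{1}{2^{n'}}*x=\frac{1}{2^{n'}}*x+\frac{1}{2^n}*x$, which is precisely the commutativity of the pair, and since the pair was arbitrary the set $H*x$ is commutative.

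The argument is essentially a clean reduction, so there is no deep obstacle; the only two points needing explicit care are the boundary case $j=m$ of the representation claim (handled trivially above) and the additivity law $k\cdot y+l\cdot y=(k+l)\cdot y$, which is standard for abelian-group powers but is not literally stated in the excerpt and should therefore be recorded, at least in passing, as an induction on the recursive formulas defining the algebraic multiplication.
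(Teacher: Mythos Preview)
Your proof is correct and follows essentially the same route as the paper: both arguments use Lemma~\ref{BML:4} to express the dyadic rescalings as integer multiples of the finest element $\frac{1}{2^m}*x$ and then invoke that integer multiples of a fixed element commute by associativity. The only cosmetic difference is that the paper, after assuming $n\le m$, rewrites just the coarser element $\frac{1}{2^n}*x$ as $2^{m-n}\cdot(\frac{1}{2^m}*x)$ and uses the simpler identity $k\cdot y + y = y + k\cdot y$, whereas you rewrite both elements and appeal to the full additivity law $k\cdot y + l\cdot y = (k+l)\cdot y$; your extra care with the boundary case $j=m$ is a welcome explicit touch.
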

\begin{proof}
	Fix $x \in G$ and take $n,m \in \mathbb{N} \cup \{0\}$. We lose no generality by assuming that $n \leq m$. Then by Lemma $\ref{BML:4}$, definiton of the algebraic multiplication and the associativity of $+$ on $G$, we obtain
	$$\frac{1}{2^n} * x + \frac{1}{2^m} * x = 2^{m-n} \cdot \Big(\frac{1}{2^m} * x\Big) + \frac{1}{2^m} * x = \frac{1}{2^m} * x + 2^{m-n} \cdot \Big(\frac{1}{2^m} * x\Big) = \frac{1}{2^m} * x + \frac{1}{2^n} * x.$$
	
\end{proof}

\begin{corollary}
	For all $x \in G$ the subgroup $H_x = \langle H * x \rangle$ is abelian.
\end{corollary}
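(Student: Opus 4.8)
The plan is to deduce the corollary from the standard group-theoretic principle that a subgroup generated by a set of pairwise commuting elements is abelian, the commutativity of the generating set $H*x$ being exactly the content of the preceding Proposition. However, it is cleaner to exploit the additional structure supplied by Lemma~\ref{BML:4}, which organizes the generators into an increasing chain of cyclic subgroups; this avoids arguing about arbitrary words in the generators and their inverses. Throughout, write $g_n\defeq\frac{1}{2^n}*x$ for $n\in\IN\cup\{0\}$, so that $H*x=\{g_n:n\in\IN\cup\{0\}\}$.

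First I would record, as the key observation, that each generator is twice the next one: Lemma~\ref{BML:4} (applied with power of two equal to $1$ and denominator index $n+1$) yields $2\cdot g_{n+1}=g_n$, so that $g_n\in\langle g_{n+1}\rangle$, and hence $\langle g_0\rangle\subseteq\langle g_1\rangle\subseteq\langle g_2\rangle\subseteq\cdots$ is an increasing chain of cyclic subgroups of $G$.

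Next I would identify $H_x$ with the union of this chain. On one hand, every $g_n$ belongs to $H_x$, so $\langle g_n\rangle\subseteq H_x$ for each $n$, giving $\bigcup_{n}\langle g_n\rangle\subseteq H_x$. On the other hand, $H*x\subseteq\bigcup_{n}\langle g_n\rangle$, and the right-hand side is a subgroup (being a union of an increasing chain of subgroups), so it contains $\langle H*x\rangle=H_x$. Thus $H_x=\bigcup_{n\in\IN\cup\{0\}}\langle g_n\rangle$.

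Finally, abelianness follows immediately: given two elements $a,b\in H_x$, each lies in some $\langle g_n\rangle$, and since the chain is increasing they both lie in a common $\langle g_N\rangle$; a cyclic subgroup is abelian, so $a+b=b+a$. As $a,b$ were arbitrary, $H_x$ is abelian. I do not anticipate a genuine obstacle here; the only points requiring minor care are that an ascending union of subgroups is again a subgroup, and that any element, involving only finitely many generators, really does land inside a single $\langle g_N\rangle$ — both routine. If one prefers to stay closer to the Proposition, the same conclusion can be reached by noting that if $g_n$ and $g_m$ commute then their inverses commute with one another and with every $g_k$, so any two finite sums of the generators $g_n$ and their inverses can be reordered to coincide.
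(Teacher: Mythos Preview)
Your proof is correct. The paper treats this corollary as immediate from the preceding Proposition (a subgroup generated by a pairwise commuting set is abelian, via the routine observation that if $a$ and $b$ commute then so do $a$ and $-b$, etc.), and gives no separate argument.

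Your main route is genuinely different: rather than invoking the Proposition at all, you use Lemma~\ref{BML:4} directly to organize the generators into an ascending chain $\langle g_0\rangle\subseteq\langle g_1\rangle\subseteq\cdots$ of cyclic subgroups whose union is $H_x$. This is a cleaner structural picture --- it shows at once that $H_x$ is not merely abelian but locally cyclic --- and it makes the Proposition itself redundant for this corollary. The paper's implicit route has the virtue of being the one-line deduction the word ``Corollary'' suggests; your route buys more information and foreshadows the $\IZ[\tfrac12]$-module structure developed in the next section.
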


\section{Every subgroup $H_x$ is a $\IZ[\frac{1}{2}]$-module}\label{s4}
It is well-known that for all $x \in G$ the abelian group $H_x$ is a $\IZ$-module with the algebraic multiplication. In this section, we shall endow the group $H_x$ with a structure of a module over the ring $\IZ[\frac12]=\{\frac m{2^n}:m\in\IZ,\;n\in\IN\cup\{0\}\}$ of dyadic fractions.

\begin{definition}
	An additive group $(G, +, 0)$ is ({\em uniquely}) \emph{$2$-divisible} if for every $x \in G$ there exists a (unique) element $y \in G$ such that $y+y = x$. 
\end{definition}

\begin{definition} An element $x$ of a group $(G,+,0)$ is defined to have {\em order} $2$ if $x+x=0\ne x$.
\end{definition}

\begin{proposition}\label{BMP:1}
	A strictly convex metric group $(G, +, 0, d)$ has no elements of order $2$.
\end{proposition}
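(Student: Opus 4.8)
The plan is to argue by contradiction, exploiting the uniqueness of midpoints that strict convexity provides. Suppose some $x\in G$ has order $2$, so that $x+x=0$ and $x\neq 0$; in particular $\|x\|=d(0,x)>0$. I would set $m:=\tfrac12*x$. By Corollary~\ref{BMC:1} we have $\|m\|=\tfrac12\|x\|$, and by Lemma~\ref{BML:3} we have $m+m=x$. Note also that $d(x,m)=\|x-\tfrac12*x\|=\tfrac12\|x\|$, exactly as computed in the proof of Lemma~\ref{BML:3}, so that $m$ is the midpoint of the geodesic from $0$ to $x$.

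First I would record the elementary identities that hold for the single element $m$. From $m+m=x$ and $x+x=0$, associativity alone gives $4\cdot m=(m+m)+(m+m)=x+x=0$, and hence $3\cdot m=-m$. Consequently $x+m=(m+m)+m=3\cdot m=-m$. Using that the norm is symmetric (which follows from translation invariance, since $\|-z\|=d(0,-z)=d(z,0)=\|z\|$), I then obtain
$$\|x+m\|=\|-m\|=\|m\|=\tfrac12\|x\|.$$

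The upshot is that the element $-m$ also realizes the two midpoint distances: $d(0,-m)=\|-m\|=\tfrac12\|x\|$ and $d(x,-m)=\|x-(-m)\|=\|x+m\|=\tfrac12\|x\|$. Thus both $m$ and $-m$ lie in $B[0,\tfrac12\|x\|]\cap B[x,\tfrac12\|x\|]$. Since $\tfrac12\|x\|+\tfrac12\|x\|=\|x\|=d(0,x)$, the strict convexity of $(G,d)$ forces this intersection to be a singleton, whence $m=-m$. Finally I would read off the contradiction: $m=-m$ gives $m+m=0$, while $m+m=x$ by Lemma~\ref{BML:3}, so $x=0$, contradicting $x\neq 0$.

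The crux of the argument is the observation that the inverse $-m$ of the midpoint is itself a second point at distance $\tfrac12\|x\|$ from both $0$ and $x$; once this is seen, the singleton property of $B[0,\tfrac12\|x\|]\cap B[x,\tfrac12\|x\|]$ does all the work. The only point requiring care is that every algebraic manipulation above ($4\cdot m=0$, $3\cdot m=-m$, $x+m=3\cdot m$) involves only integer multiples of the single element $m$, and is therefore valid by associativity alone, without any appeal to commutativity of $G$.
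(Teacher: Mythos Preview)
Your argument is correct. Every step checks out: the identities $4\cdot m=0$, $3\cdot m=-m$, and $x+m=3\cdot m$ require only associativity and the single relation $m+m=x$ from Lemma~\ref{BML:3}; the computation $d(x,-m)=\|x+m\|$ follows from right translation invariance; and the strict-convexity singleton condition then forces $m=-m$, hence $x=0$.

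As for comparison with the paper: the paper does not actually prove Proposition~\ref{BMP:1} here but defers to Lemma~2 of the companion paper~\cite{BM1}. Your proof is therefore a genuine addition --- a self-contained argument that relies only on Corollary~\ref{BMC:1} and Lemma~\ref{BML:3}, both of which are established earlier in the present paper and do not depend on Proposition~\ref{BMP:1}. The key idea --- that when $x$ has order~$2$ the inverse of the midpoint is a second midpoint of the segment $[0,x]$ --- is clean and makes the role of strict convexity transparent. Whatever the argument in~\cite{BM1} may be, yours has the virtue of keeping the paper internally self-contained on this point.
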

\begin{proof}
	See Lemma 2 in \cite{BM1}.
\end{proof}
\begin{lemma}\label{BML:5}
	For all $x \in G$ the subgroup $H_x$ is $2$-divisible.
\end{lemma}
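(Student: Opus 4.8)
The plan is to reduce an arbitrary element of $H_x$ to a \emph{normal form} as a single integer multiple of one dyadic generator, and then halve that form directly. The key observation is that, by Lemma~\ref{BML:4}, every generator $\frac{1}{2^n}*x$ with $n\le m$ equals $2^{m-n}\cdot\big(\frac{1}{2^m}*x\big)$, so all the generators indexed by $0,1,\dots,m$ are integer multiples of the single element $\frac{1}{2^m}*x$. Since $H_x=\langle H*x\rangle$ is abelian (by the preceding Corollary) and hence a $\IZ$-module, any of its elements is a finite sum $\sum_i k_i\cdot\big(\frac{1}{2^{n_i}}*x\big)$ with $k_i\in\IZ$. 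Setting $m:=\max_i n_i$ and applying Lemma~\ref{BML:4} to each summand, I would rewrite $k_i\cdot\big(\frac{1}{2^{n_i}}*x\big)=\big(k_i\,2^{m-n_i}\big)\cdot\big(\frac{1}{2^m}*x\big)$, so that the whole sum collapses to $k\cdot\big(\frac{1}{2^m}*x\big)$ for a single integer $k=\sum_i k_i\,2^{m-n_i}$ and a single $m\in\IN\cup\{0\}$. Thus every $g\in H_x$ has the form $g=k\cdot\big(\frac{1}{2^m}*x\big)$.

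With this normal form in hand, the halving is immediate. Given $g=k\cdot\big(\frac{1}{2^m}*x\big)$, I would set $h:=k\cdot\big(\frac{1}{2^{m+1}}*x\big)\in H_x$, which lies in $H_x$ since $\frac{1}{2^{m+1}}*x$ is one of its generators. Using that $H_x$ is an abelian group (so $2\cdot(k\cdot y)=k\cdot(2\cdot y)$) together with the case $n=1$ of Lemma~\ref{BML:4}, namely $2\cdot\big(\frac{1}{2^{m+1}}*x\big)=\frac{1}{2^m}*x$, I obtain $h+h=2\cdot h=k\cdot\big(2\cdot\big(\frac{1}{2^{m+1}}*x\big)\big)=k\cdot\big(\frac{1}{2^m}*x\big)=g$. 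This establishes the $2$-divisibility of $H_x$.

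The main obstacle is entirely in the first paragraph: justifying the reduction to the common denominator $\frac{1}{2^m}*x$. The halving step is essentially one application of the recursion in Lemma~\ref{BML:4}, but the normal form requires care to combine the abelian (commutativity) structure established earlier with the interplay between the metric multiplication $*$ and the algebraic multiplication $\cdot$ encoded in Lemma~\ref{BML:4}. Note that Proposition~\ref{BMP:1} is not needed here, since the lemma asserts only existence of halves; it would become relevant only if one wished to upgrade to \emph{unique} $2$-divisibility.
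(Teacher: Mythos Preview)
Your proof is correct and uses the same idea as the paper's: halve by replacing each $\frac{1}{2^n}*x$ with $\frac{1}{2^{n+1}}*x$ and invoking Lemma~\ref{BML:4} together with the $\IZ$-module structure of the abelian group $H_x$. The paper simply skips your normal-form reduction and works directly with a sum representation $y=\sum_{k} a_k\cdot(\frac{1}{2^{n_k}}*x)$, setting $z:=\sum_{k} a_k\cdot(\frac{1}{2^{n_k+1}}*x)$; your common-denominator step is valid but unnecessary.
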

\begin{proof}
	Take arbitrary $x \in G$ and $y \in H_x$. Since $H_x = \langle H * x \rangle$, the element $y$ can be represented as $y = \sum_{k=0}^m a_k \cdot (\frac{1}{2^{n_k}} * x)$, where $a_k \in \mathbb{Z}, n_k \in \mathbb{N} \cup \{0\}$ for all $k \in \{0, \dots m\}, m \in \mathbb{N}.$ Consider $z := \sum_{k=0}^m a_k \cdot (\frac{1}{2^{n_k + 1}} * x)$ and let us show that it satisfies the required equality using a $\IZ$-module properties of $H_x$ and Lemma \ref{BML:4}.
	$$z + z =  \sum_{k=0}^m a_k \cdot (\frac{1}{2^{n_k + 1}} * x) +  \sum_{k=0}^m a_k \cdot (\frac{1}{2^{n_k + 1}} * x) =  \sum_{k=0}^m a_k \cdot (2 \cdot \frac{1}{2^{n_k + 1}} * x) =  \sum_{k=0}^m a_k \cdot (\frac{1}{2^{n_k}} * x) = y.$$
\end{proof}

\begin{proposition}\label{BMP:2}
	For all $x \in G$ the subgroup $H_x$ is uniquely $2$-divisible.
\end{proposition}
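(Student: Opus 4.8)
The plan is to combine the existence of square roots, already established in Lemma~\ref{BML:5}, with the absence of order-two elements guaranteed by Proposition~\ref{BMP:1}. Existence is free: Lemma~\ref{BML:5} shows that $H_x$ is $2$-divisible, so for every $y\in H_x$ there is at least one $z\in H_x$ with $z+z=y$. It therefore remains only to prove uniqueness of such a $z$.

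For uniqueness, I would fix $y\in H_x$ and suppose $z_1,z_2\in H_x$ both satisfy $z_1+z_1=y=z_2+z_2$. The key reduction is to pass to the difference $w\defeq z_1-z_2$, which is legitimate because $H_x$ is a subgroup of $G$ and hence $w\in H_x$. Exploiting the commutativity of $H_x$ (established in the corollary preceding this section), I would compute $w+w=(z_1-z_2)+(z_1-z_2)=(z_1+z_1)-(z_2+z_2)=y-y=0$; the rearrangement of the four summands is exactly the step that requires $H_x$ to be abelian.

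Having shown $w+w=0$, the final step is to conclude $w=0$. If $w$ were nonzero, then $w+w=0\ne w$ would mean precisely that $w$ is an element of order $2$, contradicting Proposition~\ref{BMP:1}, which asserts that the strictly convex metric group $G$ (and hence its subgroup $H_x$) has no elements of order $2$. Therefore $w=0$, i.e.\ $z_1=z_2$, giving uniqueness of the square root and completing the proof that $H_x$ is uniquely $2$-divisible.

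There is no serious obstacle here; the argument is short once the right difference is formed. The only point demanding care is the bookkeeping that justifies invoking Proposition~\ref{BMP:1}: one must verify that $w=z_1-z_2$ genuinely lies in $H_x$ (so that it is an element of the ambient strictly convex group to which the no-order-two property applies) and that the commutativity of $H_x$ is used correctly when rearranging $(z_1-z_2)+(z_1-z_2)$ into $(z_1+z_1)-(z_2+z_2)$. Both are routine, so I expect the write-up to be a few lines.
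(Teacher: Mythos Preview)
Your proposal is correct and follows essentially the same argument as the paper: existence from Lemma~\ref{BML:5}, then uniqueness by forming the difference of two putative halves, using the commutativity of $H_x$ to conclude that this difference doubles to $0$, and finishing with Proposition~\ref{BMP:1}. The only minor remark is that Proposition~\ref{BMP:1} applies to all of $G$, so you do not actually need $w\in H_x$ to invoke it---membership in $G$ suffices.
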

\begin{proof}
	Fix $x \in G$. The subgroup $H_x$ is $2$-divisible by Lemma \ref{BML:5}. Since $G$ has no elements of order $2$ by Proposition \ref{BMP:1}, neither does $H_x$. If $z, z' \in H_x$ satisfy $2z = 2z'$, then $2(z-z') = 0$ by commutativity. The absence of elements of order $2$ forces $z = z'$. Hence, $H_x$ is uniquely $2$-divisible.
\end{proof}

\begin{corollary}
	For all $x \in G$ the subgroup $H_x$ is a $\IZ[\frac{1}{2}]$-module.
\end{corollary}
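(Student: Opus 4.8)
The plan is to upgrade the already-established abelian group structure on $H_x$ together with its unique $2$-divisibility (Proposition~\ref{BMP:2}) into a genuine $\IZ[\tfrac12]$-module structure. The ring $\IZ[\tfrac12]$ is generated by $\IZ$ together with the element $\tfrac12$, so the key is to define, for each $y\in H_x$, the action of $\tfrac12$ as the unique halving operator and then extend to arbitrary dyadic fractions $\tfrac{m}{2^n}$ by setting $\tfrac{m}{2^n}\cdot y \defeq m\cdot(\tfrac1{2^n}\cdot y)$, where $\tfrac1{2^n}$ means iterating the halving operator $n$ times and $m\cdot$ is the existing $\IZ$-action.

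First I would invoke Proposition~\ref{BMP:2} to obtain, for every $y\in H_x$, a \emph{unique} element $h(y)\in H_x$ with $h(y)+h(y)=y$; this $h$ is the candidate action of $\tfrac12$, and its well-definedness is exactly uniqueness of $2$-divisibility. I would then check that $h$ is a group endomorphism of $H_x$: from $h(y)+h(y)=y$ and $h(y')+h(y')=y'$ and commutativity we get $(h(y)+h(y'))+(h(y)+h(y'))=y+y'$, so by uniqueness $h(y+y')=h(y)+h(y')$. Next I would define the action of a general dyadic fraction by the formula above and verify the module axioms --- distributivity over addition in $H_x$, distributivity over addition of scalars, compatibility of scalar multiplication, and that $1$ acts as the identity. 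The only subtlety is that the representation of a dyadic rational as $\tfrac{m}{2^n}$ is not unique ($\tfrac{m}{2^n}=\tfrac{2m}{2^{n+1}}$), so I must confirm the action is independent of the representative; this reduces to the identity $m\cdot(h(h^n(y)))=\ldots$, which follows because $h$ commutes with the $\IZ$-action and $h(y)+h(y)=y$ unwinds one power of two.

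The main obstacle, and the only place where real content beyond bookkeeping enters, is verifying well-definedness on equivalence classes of representatives together with the commuting relation $m\cdot h(y)=h(m\cdot y)$ for the integer action. Since $H_x$ is abelian this commuting relation is routine by induction on $m\in\IN$ and then extended to negative integers, and the representative-independence then follows formally. Everything else is a direct, if tedious, check of the $\IZ[\tfrac12]$-module axioms using associativity and commutativity of $+$ on $H_x$ (the Corollary to the Proposition above) and the uniqueness furnished by Proposition~\ref{BMP:2}; no further use of strict convexity or the metric is needed once unique $2$-divisibility is in hand.
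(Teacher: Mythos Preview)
Your approach is correct and is essentially the standard argument that any uniquely $2$-divisible abelian group carries a canonical $\IZ[\tfrac12]$-module structure. The paper does not carry out this construction here: its proof is a single sentence invoking Proposition~\ref{BMP:2} together with ``Proposition~2 in \cite{BM1}'', i.e.\ it quotes the very fact you are reproving from their earlier paper. So the difference is purely one of packaging: the paper outsources the module construction to a prior reference, while you unfold that reference in place. Your version is more self-contained and makes explicit where each hypothesis (abelianness, absence of $2$-torsion, $2$-divisibility) is used; the paper's version is shorter but opaque without access to \cite{BM1}. Substantively the two coincide.
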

\begin{proof}
	It follows from Proposition \ref{BMP:2} using Proposition 2 in \cite{BM1}.
\end{proof}

As a result, for all $x \in G$ we obtained the operation $\circ_x: \IZ[\frac{1}{2}]  \times H_x \to H_x$, $\circ_x: (t,x) \mapsto t \circ_x x$ such that $(H_x, +, 0, \circ_x)$ is a $ \IZ[\frac{1}{2}]$-module. Since $H_x$ is a $\IZ$-module with the algebraic multiplication, we already know that $n \cdot y = n \circ_x y$ for all $y \in H_x$ and $n \in \IZ$. We shall now introduce the connection between $\circ_x$ and $*$. This will be done using the following Lemma. 
\begin{lemma}\label{BML:6}
	For all $n,m \in \IN, n \leq 2^m$ and $x \in G$ we have $n \cdot (\frac{1}{2^m} * x) = \frac{n}{2^{m}} * x.$
\end{lemma}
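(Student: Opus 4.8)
The plan is to fix $m\in\IN$ and prove the identity by induction on $n$ ranging over $\{1,\dots,2^m\}$, writing $w\defeq\frac{1}{2^m}*x$ throughout. The base case $n=1$ is immediate, since $1\cdot w=w=\frac{1}{2^m}*x$. For the inductive step I would assume $n\cdot w=\frac{n}{2^m}*x$ for some $n<2^m$ and, using the recursive definition of the algebraic multiplication, rewrite $(n+1)\cdot w=n\cdot w+w$; by the induction hypothesis this equals $\frac{n}{2^m}*x+w$, so everything reduces to identifying the single element $(n+1)\cdot w$ with the metric multiple $\frac{n+1}{2^m}*x$.

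To pin down this element I would invoke strict convexity directly. Set $q\defeq\frac{n+1}{2^m}*x=\gamma\big(\frac{n+1}{2^m}\|x\|\big)$. Since $q$ lies on the geodesic $\gamma$ from $0$ to $x$, Lemma~\ref{BML:1} and Corollary~\ref{BMC:1} give $d(0,q)=\frac{n+1}{2^m}\|x\|$ and $d(q,x)=\frac{2^m-n-1}{2^m}\|x\|$, and these two numbers sum to $\|x\|=d(0,x)$. Hence $q$ is the unique point of $B[0,\frac{n+1}{2^m}\|x\|]\cap B[x,\frac{2^m-n-1}{2^m}\|x\|]$. It therefore suffices to show that $(n+1)\cdot w$ belongs to both balls. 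Membership in the first ball follows from the subadditivity estimate $\|(n+1)\cdot w\|\le (n+1)\|w\|=\frac{n+1}{2^m}\|x\|$, obtained by applying the triangle inequality along the partial sums $0,w,2\cdot w,\dots,(n+1)\cdot w$ together with translation invariance. For the second ball I would use the identity $x=2^m\cdot w$, which is the case $n=m$ of Lemma~\ref{BML:4}; working inside the abelian subgroup $H_x$ this yields $(n+1)\cdot w-x=-(2^m-n-1)\cdot w$, whence $\|(n+1)\cdot w-x\|=\|(2^m-n-1)\cdot w\|\le (2^m-n-1)\|w\|=\frac{2^m-n-1}{2^m}\|x\|$.

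The main obstacle is conceptual rather than computational: because $G$ is not assumed abelian, one cannot simply slide the geodesic and read off $(n+1)\cdot w=\frac{n+1}{2^m}*x$, and the obvious translation arguments only equate the two \emph{norms}, not the two \emph{points}. The resolution is that the radii $\frac{n+1}{2^m}\|x\|$ and $\frac{2^m-n-1}{2^m}\|x\|$ sum exactly to $d(0,x)$, so the two subadditivity upper bounds are forced to be equalities and strict convexity collapses the intersection to the single point $q$. The two facts doing the real work are thus the subadditivity $\|k\cdot w\|\le k\|w\|$ and the relation $x=2^m\cdot w$ from Lemma~\ref{BML:4}; the commutativity of $H_x$ is needed only to compute the second norm as $\|(2^m-n-1)\cdot w\|$.
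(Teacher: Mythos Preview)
Your proof is correct and follows essentially the same line as the paper's: bound $\|k\cdot w\|$ and $\|k\cdot w - x\|$ from above via subadditivity and the identity $x=2^m\cdot w$ from Lemma~\ref{BML:4}, observe that the two radii sum to $d(0,x)$, and invoke strict convexity to force $k\cdot w=\frac{k}{2^m}*x$. Note that your induction wrapper is superfluous --- the inductive hypothesis $n\cdot w=\frac{n}{2^m}*x$ is never actually used in the estimates for $(n+1)\cdot w$, so the argument works directly for each $n\le 2^m$, which is exactly how the paper presents it.
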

\begin{proof}
	For element $\frac{n}{2^m}*x$ we have $\|\frac{n}{2^m}*x\| = \frac{n}{2^m}\|x\|$ and $\|\frac{n}{2^m}*x - x\| = (1 - \frac{n}{2^m})\|x\|$, by Lemma \ref{BML:1} and Corollary \ref{BMC:1}. 
	For element $n \cdot (\frac{1}{2^m}*x)$ we obtain inequality
	$$\|n \cdot (\tfrac{1}{2^m}*x)\| \leq n\|\tfrac{1}{2^m}*x\| = \tfrac{n}{2^m}\|x\|,$$
	by Corollary \ref{BMC:1} and the triangle inequality. Similarly, 
	\begin{multline*}
		\|x - m\cdot (\tfrac{1}{2^m}*x)\| = \|2^m \cdot (\tfrac{1}{2^m}* x) - m\cdot (\tfrac{1}{2^m}*x)\| = \|(2^m - n) \cdot (\tfrac{1}{2^m} * x)\| \leq \\ \leq (2^m - n)\|\tfrac{1}{2^m}*x\| = (1 - \tfrac{n}{2^m})\|x\|,
	\end{multline*}
	by Lemma \ref{BML:4}, Corollary \ref{BMC:1}, the triangle inequality and $\IZ$-module properties of $H_{x}.$ Assume that at least one of the above inequalities is strict. Then the triangle inequality ensures that
	$$\|x\| \leq \|m \cdot (\tfrac{1}{2^m}*x)\| + \|x - m\cdot (\tfrac{1}{2^m}*x)\| < \tfrac{n}{2^m}\|x\| + (1-\tfrac{n}{2^m})\|x\| = \|x\|,$$
	which is a contradiction. Hence, our assumption is wrong and both inequalities for element $n \cdot (\frac{1}{2^m}*x)$ hold with equality. Then the element $\frac{n}{2^m}*x$ coincides with $n \cdot (\frac{1}{2^m}*x)$ by the strict convexxity of the metric space $(G,d).$
\end{proof}

\begin{lemma}\label{BML:7}
	For all $x \in G$ and $t \in \IZ[\frac{1}{2}] \cap [0,1]$ we have $t \circ_x x = t * x.$
\end{lemma}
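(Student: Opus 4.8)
The plan is to reduce everything to the behaviour of $*$ and $\circ_x$ on the basic dyadic $\frac{1}{2^m}$ and then propagate to a general $t=\frac{n}{2^m}$. First I would write an arbitrary $t\in\IZ[\frac12]\cap[0,1]$ as $t=\frac{n}{2^m}$ with integers $m\in\IN\cup\{0\}$ and $0\le n\le 2^m$. The boundary values $t=0$ and $t=1$ are immediate: by the definition of the metric multiplication $0*x=\gamma(0)=0=0\circ_x x$ and $1*x=\gamma(\|x\|)=x=1\circ_x x$. This also disposes of the case $m=0$, so from now on I may assume $m\ge1$ and $1\le n\le 2^m$.

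The crux of the argument is the single identity $\frac{1}{2^m}*x=\frac{1}{2^m}\circ_x x$. To establish it I would compare the two sides through the integer action of $2^m$, which agrees for the algebraic multiplication and for $\circ_x$. On one hand, Lemma~\ref{BML:4} gives $2^m\cdot(\frac{1}{2^m}*x)=\frac{1}{2^{m-m}}*x=1*x=x$. On the other hand, the module axioms give $2^m\cdot(\frac{1}{2^m}\circ_x x)=(2^m\cdot\frac{1}{2^m})\circ_x x=1\circ_x x=x$. Thus both $\frac{1}{2^m}*x$ and $\frac{1}{2^m}\circ_x x$ are elements $w\in H_x$ with $2^m\cdot w=x$. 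Since $H_x$ is uniquely $2$-divisible by Proposition~\ref{BMP:2}, such a $w$ is unique: writing $2^m\cdot w=2\cdot(2^{m-1}\cdot w)$ and cancelling one factor of $2$ at a time (each cancellation justified by unique $2$-divisibility) one sees that $2^m\cdot w=2^m\cdot w'$ forces $w=w'$. Hence $\frac{1}{2^m}*x=\frac{1}{2^m}\circ_x x$.

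It then remains to pass from $\frac{1}{2^m}$ to $\frac{n}{2^m}$. Here Lemma~\ref{BML:6} (applicable since $1\le n\le 2^m$) gives $\frac{n}{2^m}*x=n\cdot(\frac{1}{2^m}*x)$. Replacing $\frac{1}{2^m}*x$ by $\frac{1}{2^m}\circ_x x$ from the previous step, and using that the integer action $n\cdot$ coincides with $n\circ_x$ on $H_x$ together with the associativity of scalar multiplication in the $\IZ[\frac12]$-module $H_x$, I obtain $n\cdot(\frac{1}{2^m}\circ_x x)=n\circ_x(\frac{1}{2^m}\circ_x x)=(n\cdot\frac{1}{2^m})\circ_x x=\frac{n}{2^m}\circ_x x$. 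Chaining these equalities yields $t*x=\frac{n}{2^m}*x=\frac{n}{2^m}\circ_x x=t\circ_x x$, as desired.

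The main obstacle is the uniqueness step in the middle paragraph: identifying $\frac{1}{2^m}*x$ with $\frac{1}{2^m}\circ_x x$ is not a formal module manipulation but genuinely requires the geometric input that $G$, and hence $H_x$, has no elements of order $2$ and is therefore uniquely $2$-divisible, so that the equation $2^m\cdot w=x$ pins down $w$ exactly. Everything else is bookkeeping with Lemmas~\ref{BML:4} and~\ref{BML:6} and the module axioms; a minor point worth keeping in mind is the consistency of the representation $t=\frac{n}{2^m}$, which is automatic once both sides are expressed through this single reduced-form computation.
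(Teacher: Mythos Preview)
Your proof is correct and follows essentially the same route as the paper: write $t=\frac{n}{2^m}$, establish $\frac{1}{2^m}*x=\frac{1}{2^m}\circ_x x$ via Lemma~\ref{BML:4} and the module structure, then propagate to $\frac{n}{2^m}$ using Lemma~\ref{BML:6}. The only cosmetic difference is that the paper handles the key identification in one line by applying the scalar $\frac{1}{2^n}\in\IZ[\tfrac12]$ directly to both sides of $2^n\circ_x(\frac{1}{2^n}*x)=x$, whereas you unpack this as unique $2$-divisibility; these are the same fact, since the $\IZ[\tfrac12]$-module structure on $H_x$ was built precisely from Proposition~\ref{BMP:2}.
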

\begin{proof}
	Let $t = \frac{m}{2^n} \in \IZ[\frac{1}{2}]$, $m \leq 2^n.$ Lemma \ref{BML:4} ensures that $2^n \circ_x (\frac{1}{2^n} * x) = x$, which implies $\frac{1}{2^n} *x = \frac{1}{2^n} \circ_x x$. Then we obtain
	$$\tfrac{m}{2^n} \circ_x x = m \circ_x (\tfrac{1}{2^n} \circ_x x) = m \circ_x (\tfrac{1}{2^n}*x) = \tfrac{m}{2^n}*x,$$
	by the above equality, $\IZ[\frac{1}{2}]$-module properties of $\circ_x$ on $H_x$ and Lemma \ref{BML:6}.
\end{proof}

We extend both the algebraic multiplication and the metric multiplication on $G$ to $\boldsymbol{\cdot}: \mathbb{R} \times G \to G$, $\boldsymbol{\cdot}: (t,x) \mapsto tx$ by setting $tx = [t] \cdot x + \{t\} * x$ for all $t \in \mathbb{R}$ and $x \in G$. We will call this extension the {\em real multiplication.} Witness that the results of this section imply the following propoistion.

\begin{proposition}\label{BMP:3}
	For all $x \in G$ and $t \in \IZ[\frac{1}{2}]$ we have $tx = t \circ_x x.$
\end{proposition}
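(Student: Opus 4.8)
The plan is to decompose $t$ into its integer and fractional parts and then match the two summands in the definition of real multiplication against the two compatibility results already established, namely the $\IZ$-module identification and Lemma~\ref{BML:7}. Throughout I will use that $x$ itself lies in $H_x$, since $x=\frac{1}{2^0}*x\in H*x\subseteq H_x$, so that the operation $\circ_x$ can legitimately be applied to $x$.

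First I would write $t=[t]+\{t\}$ with $[t]\in\IZ$ and $\{t\}\in[0,1)$. The one point requiring a short verification is that the fractional part is again a dyadic fraction: since $t\in\IZ[\frac12]$ and $[t]\in\IZ\subseteq\IZ[\frac12]$, and $\IZ[\frac12]$ is closed under subtraction, we get $\{t\}=t-[t]\in\IZ[\frac12]\cap[0,1)$. In particular $\{t\}\in\IZ[\frac12]\cap[0,1]$, so Lemma~\ref{BML:7} is applicable to it.

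Next I would unfold the definition of the real multiplication and replace each summand by its $\circ_x$-counterpart. By definition $tx=[t]\cdot x+\{t\}*x$. For the first summand, since $[t]\in\IZ$ and the algebraic multiplication coincides with $\circ_x$ on integers (as noted just before Lemma~\ref{BML:6}), we have $[t]\cdot x=[t]\circ_x x$. For the second summand, Lemma~\ref{BML:7} gives $\{t\}*x=\{t\}\circ_x x$. Combining these yields
$$tx=[t]\circ_x x+\{t\}\circ_x x.$$
Finally, invoking the distributivity of scalar addition in the $\IZ[\frac12]$-module $H_x$, the right-hand side equals $([t]+\{t\})\circ_x x=t\circ_x x$, which is the claimed identity.

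I do not expect a genuine obstacle here: the statement is essentially a bookkeeping assembly of the results of this and the previous section. The only place demanding care is the bridging step, where one must confirm that the fractional part $\{t\}$ stays inside $\IZ[\frac12]\cap[0,1]$ so that Lemma~\ref{BML:7} genuinely applies, and that the integer part is handled through the already-known agreement of $\cdot$ and $\circ_x$ on $\IZ$; once both summands are rewritten in terms of $\circ_x$, the module axioms finish the argument mechanically.
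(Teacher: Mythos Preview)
Your proof is correct and follows essentially the same route as the paper: decompose $t=[t]+\{t\}$, replace $[t]\cdot x$ and $\{t\}*x$ by $[t]\circ_x x$ and $\{t\}\circ_x x$ via the integer identification and Lemma~\ref{BML:7}, and then use the $\IZ[\tfrac12]$-module distributivity to recombine. Your explicit checks that $x\in H_x$ and that $\{t\}\in\IZ[\tfrac12]\cap[0,1]$ are welcome clarifications but do not alter the argument.
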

\begin{proof}
	Since $[t] \in \IZ$ and $\{t\} \in \IZ[\frac{1}{2}]\cap [0,1]$, we have
	$$tx = [t] \cdot x + \{t\} * x = [t] \circ_x x + \{t\} \circ_x x = ([t] + \{t\}) \circ_x x = t \circ_x x,$$
	by defenition of the real multiplication, Lemma \ref{BML:7} and $\IZ[\frac{1}{2}]$-module properties of $\circ_x$ on $H_x$.
\end{proof}

\section{The real multiplication is a topological group homomorphism}\label{s5}
In this section, we start by proving some important properties of the real multiplication, introduced in the following Lemmas.

\begin{lemma}\label{BML:8}
	For all $t, v \in \mathbb{R}$ and $x \in G$ we have $\|tx - vx\| \leq |t-v| \cdot \|x\|.$
\end{lemma}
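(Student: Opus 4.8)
The plan is to exploit the splitting $tx=[t]\cdot x+\{t\}*x$ built into the real multiplication, reducing the inequality to comparisons of elements that differ only in their metric-multiplication part. The essential difficulty is that $G$ is not assumed abelian, so when $t$ and $v$ have different integer parts we cannot simply subtract the expressions $[t]\cdot x+\{t\}*x$ and $[v]\cdot x+\{v\}*x$ and freely cancel. To get around this I would never subtract across different integer parts directly; instead I would first treat the case in which $t$ and $v$ lie in one common unit interval, where the integer contributions coincide and cancel by translation invariance alone, and then reduce the general case to this one by a telescoping application of the triangle inequality.

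First I would record the auxiliary identity that for every $k\in\IZ$ and every $s\in[0,1]$ one has $(k+s)x=k\cdot x+s*x$. For $s\in[0,1)$ this is just the definition of the real multiplication, since $[k+s]=k$ and $\{k+s\}=s$; for the endpoint $s=1$ it follows from $1*x=x$ together with $(k+1)\cdot x=k\cdot x+x$, the latter holding for all integers $k$, positive and negative, directly from the recursive definition of the algebraic multiplication. With this in hand, for fixed $k\in\IZ$ and any $s,r\in[0,1]$ the left translation invariance of $d$ and Lemma~\ref{BML:1} give
\begin{equation*}
d\big((k+s)x,(k+r)x\big)=d\big(k\cdot x+s*x,\;k\cdot x+r*x\big)=d(s*x,r*x)=|s-r|\cdot\|x\|=\big|(k+s)-(k+r)\big|\cdot\|x\|.
\end{equation*}
Thus within any single unit interval the claimed inequality holds, in fact with equality, and no use of commutativity is required.

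Next I would deduce the general inequality by chaining. Assuming without loss of generality that $t\le v$, set $n=[t]$ and $m=[v]$. If $n=m$, the previous paragraph applied with $k=n$ finishes the proof. If $n<m$, I would insert the intermediate integers and consider the increasing chain $t,\,n+1,\,n+2,\,\dots,\,m,\,v$; each pair of consecutive points lies in a common closed unit interval, so the triangle inequality together with the displayed formula bounds $d(tx,vx)$ by the sum of the consecutive gaps times $\|x\|$. Since these gaps are $n+1-t$, then $1$ repeated $m-n-1$ times, then $v-m$, the sum telescopes to $(v-t)\|x\|=|t-v|\cdot\|x\|$, as required.

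The main obstacle, and the reason the statement is only an inequality rather than the equality valid inside a single interval, is precisely the possible non-commutativity of $G$: across integer boundaries the triangle inequality can only yield an upper bound, and care is needed so that every step compares two points sharing the same integer multiple of $x$, so that translation invariance, rather than additivity of the norm, performs the cancellation. A secondary point to handle carefully is the bookkeeping with the floor function $[\cdot]$ at the right endpoints of the unit intervals and for negative values of $t$, which is why the endpoint case $s=1$ of the auxiliary identity must be stated explicitly.
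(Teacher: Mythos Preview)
Your proof is correct and follows essentially the same approach as the paper: handle the case $[t]=[v]$ via left translation invariance and Lemma~\ref{BML:1}, and in the case $[t]<[v]$ bridge the integer boundaries by the triangle inequality. The only cosmetic difference is that the paper inserts just the two breakpoints $[t+1]$ and $[v]$ and bounds the middle chunk $\|[t+1]x-[v]x\|$ in one stroke, whereas you telescope through every intermediate integer; your version is slightly more careful in explicitly verifying the endpoint identity $(k+1)x=k\cdot x+x$ for all $k\in\IZ$, which the paper uses implicitly.
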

\begin{proof}
	We lose no generality by assuming $t < v$. If $[t] = [v]$, we immediately obtain
	$$\|tx - vx\| = d([t]x + \{t\}*x, [v]x + \{v\}*x) = d(\{t\}*x, \{v\}*x) = |\{t\} - \{v\}| \cdot\|x\| = |t-v|\cdot\|x\|,$$
	by the definition of the algebraic multiplication, translation invariance of the metric and Lemma \ref{BML:1}. If $[t] < [v]$, we similarly obtain
	\begin{multline*}
		\|tx - vx\| \leq \|tx - [t+1]x\| + \|[t+1]x - [v]x\| + \|[v]x - vx\| = \|x - \{t\}*x\| + \|([t+1] - [v])x\| + \|\{v\}*x\| \\ \leq (1-\{t\})\|x\| + ([v] - [t+1])\|x\| + \{v\}\|x\| = (v-t)\|x\| = |t-v| \cdot\|x\|,
	\end{multline*}
	by the definition of the algebraic multiplication, triangle inequality, Lemma \ref{BML:1} and Corollary \ref{BMC:1}.
\end{proof}
Lemma \ref{BML:8} implies
\begin{lemma}
	For all $x \in G$ the function $\boldsymbol{\cdot}_x: \mathbb{R} \to G$, $\boldsymbol{\cdot}_x: t \mapsto tx$, is continuous.
\end{lemma}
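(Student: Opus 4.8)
The plan is to observe that Lemma~\ref{BML:8} already exhibits $\boldsymbol{\cdot}_x$ as a Lipschitz map, and to deduce continuity directly from this Lipschitz estimate. There is no real geometric content left to extract; the work is entirely in converting the given norm inequality into the standard $\varepsilon$–$\delta$ statement.

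First I would fix $x \in G$ and rewrite the inequality of Lemma~\ref{BML:8} as a statement about the metric $d$. Since the norm satisfies $\|y\| = d(0,y)$, the metric is translation invariant, and the elements $tx, vx$ both lie in the abelian subgroup $H_x$, for any $t, v \in \IR$ we have
$$d(tx, vx) = d(tx - vx, 0) = \|tx - vx\| \leq |t-v|\cdot\|x\|,$$
where the first equality comes from adding $-vx$ to both arguments and using symmetry of $d$. Thus $\boldsymbol{\cdot}_x$ is a Lipschitz map from $\IR$ to $(G,d)$ with Lipschitz constant $\|x\|$.

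Next I would turn this into the definition of continuity. If $x = 0$, then $tx = 0$ for all $t \in \IR$, so $\boldsymbol{\cdot}_x$ is constant and trivially continuous. If $x \neq 0$, then $\|x\| > 0$, and given any $v \in \IR$ and any $\varepsilon > 0$, setting $\delta \defeq \varepsilon/\|x\|$ guarantees that $|t-v| < \delta$ forces $d(tx, vx) \leq |t-v|\cdot\|x\| < \varepsilon$. This verifies continuity (indeed uniform continuity) of $\boldsymbol{\cdot}_x$ at every point of $\IR$.

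Since the whole argument is a rewriting of a Lipschitz bound, I do not expect any genuine obstacle. The single point deserving a moment of care is the passage from the norm inequality $\|tx-vx\| \le |t-v|\cdot\|x\|$ to the metric inequality $d(tx,vx) \le |t-v|\cdot\|x\|$, which relies on translation invariance of $d$ together with the commutativity of $H_x$ established in the previous section; without that commutativity the subtraction $tx - vx$ would not be unambiguous.
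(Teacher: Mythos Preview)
Your proposal is correct and matches the paper's approach exactly: the paper simply writes ``Lemma~\ref{BML:8} implies'' before stating this lemma, leaving the routine passage from a Lipschitz bound to continuity implicit, and you have spelled that out. One small correction: your assertion that $tx, vx \in H_x$ for arbitrary real $t,v$ is false (it holds only for dyadic $t$, since $H_x$ is generated by the $\frac{1}{2^n}*x$), and in any case the commutativity you invoke is unnecessary---the identity $d(a,b)=\|a-b\|$ with $a-b\defeq a+(-b)$ follows in any metric group from right translation invariance alone, so the subtraction is already unambiguous and your final caveat can be dropped.
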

\begin{lemma}\label{BML:9}
	For all $t,v \in \mathbb{R}$ and $x \in G$ we have $(t+v)x = tx + vx.$
\end{lemma}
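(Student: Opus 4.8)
The plan is to prove the identity first for dyadic scalars, where it reduces to a purely algebraic statement about the module structure on $H_x$, and then to extend it to all real scalars by a density and continuity argument.

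First I would settle the case $t,v\in\IZ[\frac12]$. Since the dyadic fractions form a ring, the sum $t+v$ again lies in $\IZ[\frac12]$, so all three of $t$, $v$, $t+v$ act on $x$ through the module operation $\circ_x$. Concretely, Proposition~\ref{BMP:3} gives $(t+v)x=(t+v)\circ_x x$, $tx=t\circ_x x$ and $vx=v\circ_x x$, and the distributivity of scalar addition in the $\IZ[\frac12]$-module $H_x$ then yields
$$(t+v)x=(t+v)\circ_x x=t\circ_x x+v\circ_x x=tx+vx,$$
so the identity holds on the dyadic fractions.

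Next I would pass to arbitrary $t,v\in\IR$. Fix sequences $(t_k)$ and $(v_k)$ in $\IZ[\frac12]$ with $t_k\to t$ and $v_k\to v$; then $t_k+v_k\to t+v$ as well. The quantitative estimate of Lemma~\ref{BML:8} shows that the map $s\mapsto sx$ is (Lipschitz) continuous, so $t_kx\to tx$, $v_kx\to vx$ and $(t_k+v_k)x\to(t+v)x$. On the other hand, the translation invariance of the metric makes the group operation of $G$ continuous: for any $a_k\to a$ and $b_k\to b$ one has $d(a_k+b_k,a+b)\le d(a_k,a)+d(b_k,b)\to 0$. Applying this with $a_k=t_kx$ and $b_k=v_kx$ gives $t_kx+v_kx\to tx+vx$. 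Since each $t_k+v_k$ is dyadic, the dyadic case yields $(t_k+v_k)x=t_kx+v_kx$; passing to the limit on both sides and using uniqueness of limits in the metric space $G$ gives $(t+v)x=tx+vx$.

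I expect the only genuine subtlety to lie in the transition from dyadic to real scalars. For irrational $t$ the element $tx$ need not belong to the abelian subgroup $H_x$, so the module identity alone cannot close the argument and the topological extension is really needed; the point that makes it go through is precisely that $G$, although not yet known to be abelian, is a topological group because its metric is translation invariant, so addition is jointly continuous and the limit may be taken inside the (possibly non-commutative) ambient group.
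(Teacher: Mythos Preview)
Your proof is correct and follows essentially the same approach as the paper: establish the identity on $\IZ[\tfrac12]$ via Proposition~\ref{BMP:3} and the module distributivity on $H_x$, then extend to all of $\IR$ by density and the continuity of $\boldsymbol{\cdot}_x$ (Lemma~\ref{BML:8}) together with the joint continuity of addition afforded by the two-sided invariant metric. The only cosmetic difference is that the paper phrases the extension step as a closed-set argument (the set $\{(t,v):(t+v)x=tx+vx\}$ is closed and contains the dense set $\IZ[\tfrac12]\times\IZ[\tfrac12]$) rather than via explicit sequences.
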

\begin{proof}
	The continuity of the functions $\boldsymbol{\cdot}_x: \IR \to G$ and $+:G\times G\to G$ implies that the set $F=\{(t,v)\in\IR\times\IR:(t+v)x=tx+vx\}$ is closed in the real plane $\IR\times \IR$. Then by Proposition \ref{BMP:3} and $\IZ[\frac{1}{2}]$-module properties of $H_x$ the closed set $F$ contains the dense subset $\IZ[\frac12]\times\IZ[\frac12]$ of $\IR\times\IR$ and hence $F=\IR\times\IR$, witnessing that $(t+v)x=tx+vx$ for all $t,v\in\IR$. 
\end{proof}

\begin{definition}
	A map $\phi: G \to S$ between two topological groups $G$ and $S$ is called a {\em topological group homomorphism} if it is a continuous group homomorphism from $G$ to $S$. If in addition, $\phi^{-1}: S \to G$ is also a topological group homomorphism, then $\phi$ is called a {\em topological group isomorphism.}
\end{definition}

Lemma \ref{BML:8} and Lemma \ref{BML:9} imply
\begin{proposition}\label{BMP:4}
	For all $x \in G$ the map $\boldsymbol{\cdot}_x: \mathbb{R} \to G$, $\boldsymbol{\cdot}_x: t \mapsto  tx$ is a topological group homomorphism.
\end{proposition}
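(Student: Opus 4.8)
The plan is to unwind the definition of a topological group homomorphism, which asks two things of $\boldsymbol{\cdot}_x$: that it be a group homomorphism from the additive group $(\IR,+,0)$ to $(G,+,0)$, and that it be continuous with respect to the usual topology on $\IR$ and the metric topology on $G$. Both ingredients have already been isolated in the preceding lemmas, so the proof reduces to assembling them, with no fresh computation required.

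First I would record the homomorphism property. Lemma \ref{BML:9} gives $(t+v)x = tx + vx$ for all $t,v\in\IR$, which is exactly the statement that $\boldsymbol{\cdot}_x$ respects addition. This also forces the identity to be preserved: taking $t=v=0$ yields $0x = 0x + 0x$, hence $0x = 0$ (equivalently, this is immediate from the definition $0x = [0]\cdot x + \{0\}*x = 0$). Thus $\boldsymbol{\cdot}_x$ is a genuine group homomorphism. For continuity I would invoke Lemma \ref{BML:8}: the estimate $\|tx - vx\| \le |t-v|\cdot\|x\|$ shows that $\boldsymbol{\cdot}_x$ is Lipschitz with constant $\|x\|$, hence uniformly continuous; this is precisely the content of the continuity lemma stated immediately after Lemma \ref{BML:8}. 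Combining the two observations, $\boldsymbol{\cdot}_x$ is a continuous group homomorphism, i.e. a topological group homomorphism, as claimed.

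As for the main obstacle, there genuinely is none at this stage: all the substantive work has already been done upstream, namely the additivity in Lemma \ref{BML:9} (obtained by a density argument from the $\IZ[\tfrac12]$-module structure) and the Lipschitz bound in Lemma \ref{BML:8}. The only point one should check, so that the phrase \emph{topological group homomorphism} is even meaningful, is that $(G,+,0,d)$ is itself a topological group; but translation invariance of $d$ makes both addition and inversion continuous, so this is automatic and needs no separate argument.
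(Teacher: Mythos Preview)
Your proposal is correct and matches the paper's approach exactly: the paper simply states that Lemma~\ref{BML:8} and Lemma~\ref{BML:9} imply Proposition~\ref{BMP:4}, and you have done nothing more than spell out that implication. The extra remarks you make (that $0x=0$, that $G$ is a topological group) are harmless sanity checks and do not deviate from the intended argument.
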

For all $x \in G$ by $\IR x$ we denote the image of $\IR$ under the map $\boldsymbol{\cdot}_x: \IR \to G$. 

\section{Every subgroup $\IR x$ is isomorphic to $\IR$.}\label{s6}
In this section, we shall show that the map $\boldsymbol{\cdot}_x: \IR \to G$ is actually a topological group isomorphism. To prove it we first show that $\IR x$ is not monothetic. This will be done by contradiction, using the following Lemmas.
\begin{definition}
	A topological group $G$ is called \emph{monothetic} if it contains a dense cyclic subgroup; that is, there exists $g \in G$ such that $\overline{ \langle g \rangle} = \overline{\{ng: n \in \IZ\}} = G.$ The set of all such $g\in G$ is denoted by $\operatorname{Gen}(G) = \{g \in G: \overline{\langle g \rangle} = G\}.$
\end{definition}
\begin{lemma}\label{BML:10}
	If $\IR x \subseteq G$ is monothetic, then there exists dense $A \subseteq \IR$ such that $Gen(\IR x) = \boldsymbol{\cdot}_x(A).$
\end{lemma}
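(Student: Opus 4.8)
The plan is to let $A$ be nothing more exotic than the full preimage $A \defeq \boldsymbol{\cdot}_x^{-1}(\operatorname{Gen}(\IR x))$. Because $\operatorname{Gen}(\IR x)\subseteq\IR x=\boldsymbol{\cdot}_x(\IR)$ sits inside the image of $\boldsymbol{\cdot}_x$, the elementary set-theoretic identity $f(f^{-1}(S))=S\cap f(X)$ gives $\boldsymbol{\cdot}_x(A)=\operatorname{Gen}(\IR x)$ for free, so the required equality $\operatorname{Gen}(\IR x)=\boldsymbol{\cdot}_x(A)$ is automatic and the entire content of the lemma is the assertion that this $A$ is dense in $\IR$. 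If $\IR x=\{0\}$ the statement is vacuous, since then $A$ is the whole kernel of $\boldsymbol{\cdot}_x$, i.e. all of $\IR$; so I would assume $\IR x\neq\{0\}$, in which case a topological generator $g$ is necessarily nonzero and any $s$ with $\boldsymbol{\cdot}_x(s)=g$ is nonzero, producing a nonzero point $s\in A$ to anchor the density argument.

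To establish density I would prove that $A$ is invariant under multiplication by every nonzero rational and then use $s\neq 0$. This splits into two closure properties. (i) \emph{Closure under division by a nonzero integer $m$}: if $t\in A$, then the homomorphism property from Proposition~\ref{BMP:4} gives $\boldsymbol{\cdot}_x(t)=m\,\boldsymbol{\cdot}_x(t/m)$, whence $\langle\boldsymbol{\cdot}_x(t)\rangle\subseteq\langle\boldsymbol{\cdot}_x(t/m)\rangle$; taking closures shows $\boldsymbol{\cdot}_x(t/m)$ also generates $\IR x$, so $t/m\in A$. (ii) \emph{Closure under multiplication by a nonzero integer $k$}: here I would use that $\IR x$, being a continuous homomorphic image of the divisible group $\IR$, is itself divisible, so the endomorphism $y\mapsto ky$ of $\IR x$ is continuous and surjective; a continuous surjection carries dense subsets to dense subsets, and it sends the dense cyclic subgroup $\langle\boldsymbol{\cdot}_x(t)\rangle$ onto $\langle k\,\boldsymbol{\cdot}_x(t)\rangle=\langle\boldsymbol{\cdot}_x(kt)\rangle$, which is thus dense, so $kt\in A$. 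Combining (i) and (ii), $A$ is closed under multiplication by every element of $\mathbb{Q}\setminus\{0\}$, hence $(\mathbb{Q}\setminus\{0\})\,s\subseteq A$; and since $s\neq 0$, the set $(\mathbb{Q}\setminus\{0\})\,s$ is dense in $\IR$, so $A$ is dense.

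The only genuinely nonroutine ingredient is step (ii): one must notice that divisibility of $\IR x$ is exactly what makes $y\mapsto ky$ surjective, and then invoke the easy-to-overlook fact that a continuous surjection $f\colon X\to Y$ preserves density, which follows since $\overline{D}=X$ and continuity force $Y=f(X)=f(\overline{D})\subseteq\overline{f(D)}$. Everything else—the automatic surjectivity onto $\operatorname{Gen}(\IR x)$, the division closure in (i), and the final density of $(\mathbb{Q}\setminus\{0\})\,s$—is straightforward bookkeeping built on the homomorphism property of $\boldsymbol{\cdot}_x$ recorded in Proposition~\ref{BMP:4}.
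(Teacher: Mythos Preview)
Your argument is correct. The definition $A=\boldsymbol{\cdot}_x^{-1}(\operatorname{Gen}(\IR x))$ makes the equality $\boldsymbol{\cdot}_x(A)=\operatorname{Gen}(\IR x)$ automatic, and the two closure properties (i) and (ii) are each valid: step~(i) is immediate from the inclusion of cyclic subgroups, and step~(ii) uses exactly the right fact, namely that the continuous surjective endomorphism $y\mapsto ky$ of the divisible group $\IR x$ carries the dense set $\langle\boldsymbol{\cdot}_x(t)\rangle$ onto the then-dense set $\langle\boldsymbol{\cdot}_x(kt)\rangle$. Anchoring the density with a single nonzero $s\in A$ and the orbit $(\mathbb{Q}\setminus\{0\})s$ is clean and sufficient.

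As for the comparison: the paper does not prove this lemma at all but simply cites the companion preprint \cite{BMM}. Your proof is therefore not merely a different route but a genuine, self-contained replacement for an external reference. What you gain is independence from an unpublished source and an argument that uses nothing beyond Proposition~\ref{BMP:4} and elementary topology; what the paper's citation presumably buys is a more general statement about one-parameter groups (since \cite{BMM} is also invoked in Corollary~\ref{BMC:2} to characterize $\IR$ among one-parameter metrizable groups), of which this lemma is a special case. If you wished to make the manuscript fully self-contained you could insert your argument verbatim.
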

\begin{proof}
	See \cite{BMM}.
\end{proof}
\begin{lemma}\label{BML:11}
	If $\IR x \subseteq G$ is monothetic, then there exists $tx \in \operatorname{Gen}(\IR x)$ such that $\|tx\| < |t|\cdot\|x\|$.
\end{lemma}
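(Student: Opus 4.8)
The plan is to argue by contradiction, the decisive point being that $\IR$ itself is not monothetic. I may assume $x\neq 0$, so that $\|x\|>0$ (for $x=0$ the subgroup $\IR x$ is trivial). First I would record the universal upper bound obtained from Lemma \ref{BML:8} by taking $v=0$ and using $0x=0$, namely $\|tx\|\le|t|\cdot\|x\|$ for every $t\in\IR$. Hence the conclusion amounts to producing a single real $t$ for which $tx\in\operatorname{Gen}(\IR x)$ while this inequality is \emph{strict}, i.e. for which equality fails.

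Suppose no such $t$ exists. By Lemma \ref{BML:10}, the monotheticity of $\IR x$ supplies a dense set $A\subseteq\IR$ with $\operatorname{Gen}(\IR x)=\boldsymbol{\cdot}_x(A)$; since each $t\in A$ yields a generator $tx$, my assumption forces $\|tx\|=|t|\cdot\|x\|$ for every $t\in A$. Both sides are continuous functions of $t$ — the left one because $\boldsymbol{\cdot}_x$ is continuous and $\|\cdot\|=d(0,\cdot)$ is continuous — and they coincide on the dense set $A$, hence on all of $\IR$. Thus $\|tx\|=|t|\cdot\|x\|$ for every $t\in\IR$.

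Combined with the additivity of Lemma \ref{BML:9} (so that $tx-vx=(t-v)x$), this upgrades $\boldsymbol{\cdot}_x$ to an isometry, $\|tx-vx\|=|t-v|\cdot\|x\|$. As a surjective isometric homomorphism it is then a topological group isomorphism of $\IR$ onto $\IR x$ (with $\|x\|>0$ it is merely the usual metric rescaled). But monotheticity is a topological-group invariant, and $\IR$ admits no dense cyclic subgroup — every nontrivial cyclic subgroup of $\IR$ is discrete and closed, and the trivial one is $\{0\}$ — so $\IR x$ would fail to be monothetic, contradicting the hypothesis. This contradiction yields the required generator $tx$ with $\|tx\|<|t|\cdot\|x\|$.

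The genuinely load-bearing step is the implication ``equality on a dense parameter set $\Rightarrow$ global isometry $\Rightarrow\IR x\cong\IR$'': this is where the non-monotheticity of $\IR$ is invoked and where the monotheticity hypothesis (through Lemma \ref{BML:10}) is actually consumed. Everything else is routine continuity-and-density bookkeeping, so I expect no serious obstacle beyond correctly marshalling these facts and keeping track that the parameters of generators form a dense set.
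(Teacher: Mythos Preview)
Your proof is correct and tracks the paper's argument almost verbatim through the point where density (Lemma~\ref{BML:10}) and continuity force $\|tx\|=|t|\cdot\|x\|$ for all $t\in\IR$. The only divergence is in extracting the final contradiction: the paper argues directly that for a generator $tx$ the element $\tfrac{t}{2}x$, having norm $\tfrac{|t|}{2}\|x\|$, cannot lie in $\overline{\langle tx\rangle}$ (whose elements have norms in $|t|\,\|x\|\cdot\{0,1,2,\dots\}$), whereas you upgrade $\boldsymbol{\cdot}_x$ to an isometric isomorphism $\IR\to\IR x$ and invoke the non-monotheticity of $\IR$. Your route is slightly more conceptual and in effect previews Corollary~\ref{BMC:2}; the paper's norm computation is more self-contained at this stage. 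Both are clean and equally valid.
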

\begin{proof}
	 Assume to the contrary that for all $tx \in \operatorname{Gen}(\IR x)$ we have $\|tx\| \ge |t|\cdot\|x\|$. Then Lemma \ref{BML:8} ensures that $\|tx\| = |t|\cdot\|x\|$.  The continuity of the functions $\boldsymbol{\cdot}_x$ and $\|\cdot\|: G \to \IR$ implies that the set $F = \{t \in \mathbb{R}: \|tx\| = |t| \cdot\|x\|\}$ is closed in the real line $\IR$. Therefore, the closed set $F$ contains the set $\boldsymbol{\cdot}_x^{-1}(\operatorname{Gen}(\IR x))$, which is dense subset of $\IR$ by Lemma \ref{BML:10}. Hence, $F = \IR$, witnessing that $\|tx\| = |t| \cdot\|x\|$ for all $t \in \IR.$ But then for $tx \in \operatorname{Gen}(\IR x)$ the element $\frac{t}{2} x \not \in \overline{\langle tx \rangle}$, which is a contradiction. Hence, there exists $tx \in \operatorname{Gen}(\IR x)$ with $\|tx\| < |t|\cdot\|x\|$.
\end{proof}

\begin{lemma}\label{BML:12}
	If $\IR x \subseteq G$ is monothetic, then there exist $tx \in \operatorname{Gen}(\IR x)$ and distinct $a \in \IR x$, $b \in H_{tx}$ such that $2^na = tx = 2^nb$ for some $n \in \mathbb{N}.$
\end{lemma}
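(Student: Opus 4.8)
The plan is to argue by contradiction, after first observing that the elements $a$ and $b$ are essentially forced. Since $G$ has no elements of order $2$ (Proposition~\ref{BMP:1}), the subgroup $\IR x$, being an abelian $2$-divisible group (a homomorphic image of $\IR$ under $\boldsymbol{\cdot}_x$, Proposition~\ref{BMP:4}) with no elements of order $2$, is uniquely $2$-divisible; so is each $H_{tx}$ by Proposition~\ref{BMP:2}. Hence both are uniquely $2^n$-divisible for every $n$. Consequently, for a generator $tx$ and a fixed $n\in\IN$, the equation $2^n a=tx$ has a unique solution $a=\tfrac{t}{2^n}x$ in $\IR x$ (indeed $2^n\cdot\tfrac{t}{2^n}x=tx$ by Lemma~\ref{BML:9}), and $2^n b=tx$ has a unique solution $b=\tfrac1{2^n}*(tx)$ in $H_{tx}$ (indeed $2^n\cdot(\tfrac1{2^n}*(tx))=tx$ by Lemma~\ref{BML:4}). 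Thus the lemma is false precisely when $\tfrac{t}{2^n}x=\tfrac1{2^n}*(tx)$ for \emph{every} generator $tx\in\operatorname{Gen}(\IR x)$ and \emph{every} $n\in\IN$, and I assume this throughout.

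The key step is to turn this algebraic coincidence into metric rigidity. Fixing a generator $tx$ and multiplying the assumed equality by an integer $k$ with $0\le k\le 2^n$, I get $\tfrac{kt}{2^n}x=k\cdot\tfrac{t}{2^n}x=k\cdot(\tfrac1{2^n}*(tx))=\tfrac{k}{2^n}*(tx)$, using additivity of $\boldsymbol{\cdot}_x$ (Lemma~\ref{BML:9}) on the left and Lemma~\ref{BML:6} on the right. Hence $(rt)x=r*(tx)$ for every dyadic $r=\tfrac{k}{2^n}\in[0,1]$. Since $r\mapsto(rt)x$ is continuous (continuity of $\boldsymbol{\cdot}_x$, established after Lemma~\ref{BML:8}) and $r\mapsto r*(tx)=\gamma(r\|tx\|)$ is continuous as a reparametrised geodesic, and the dyadic fractions are dense in $[0,1]$, these maps agree on all of $[0,1]$. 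Taking norms and applying Corollary~\ref{BMC:1} gives $\|(rt)x\|=r\|tx\|$; writing $s=rt$ and using $\|(-s)x\|=\|sx\|$, I conclude that $\|sx\|=\tfrac{\|tx\|}{|t|}\,|s|$ for all $s$ with $|s|\le|t|$. So the coincidence of roots linearises the norm along the ray through $x$ on the whole segment $[-|t|,|t|]$.

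It remains to globalise and contradict Lemma~\ref{BML:11}. By Lemma~\ref{BML:10} there is a dense set $A\subseteq\IR$ with $\operatorname{Gen}(\IR x)=\boldsymbol{\cdot}_x(A)$, and a dense subset of $\IR$ is unbounded. For any generator $tx$ with $|t|>1$, evaluating the linear relation at $s=1$ yields $\|x\|=\tfrac{\|tx\|}{|t|}$, so in fact $\|sx\|=|s|\,\|x\|$ for all $|s|\le|t|$. Letting $|t|\to\infty$ along $A$ exhausts the line, giving $\|sx\|=|s|\,\|x\|$ for every $s\in\IR$. This contradicts Lemma~\ref{BML:11}, which furnishes a generator $t_0x$ with $\|t_0x\|<|t_0|\,\|x\|$. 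Hence the assumption fails, and for some generator $tx$ and some $n$ the roots $a=\tfrac{t}{2^n}x\in\IR x$ and $b=\tfrac1{2^n}*(tx)\in H_{tx}$ are distinct while $2^n a=tx=2^n b$, as required.

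I expect the middle paragraph to be the crux: converting the purely algebraic identity $\tfrac{t}{2^n}x=\tfrac1{2^n}*(tx)$ into the metric statement that $s\mapsto\|sx\|$ is linear on a full segment is where strict convexity really enters, through the geodesic identities of Corollary~\ref{BMC:1} and Lemmas~\ref{BML:4} and~\ref{BML:6}, together with the continuity--density argument. The final globalisation is comparatively soft, but it crucially relies on the monothetic hypothesis through Lemma~\ref{BML:10}, which is what supplies generators $tx$ of arbitrarily large parameter $|t|$ and thereby lets the local linearity reach the normalisation point $s=1$ and collide with the strict inequality of Lemma~\ref{BML:11}.
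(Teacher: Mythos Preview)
Your proof is correct, but the paper's argument is considerably more direct. Rather than argue by contradiction, the paper starts immediately from the generator $tx$ furnished by Lemma~\ref{BML:11} (with $\|tx\|<|t|\,\|x\|$, and implicitly $t>0$), chooses $n\in\IN$ so that $t/2^n\in[0,1]$, and sets $a=\tfrac{t}{2^n}x$ and $b=\tfrac1{2^n}*(tx)$. The point is that because $t/2^n\in[0,1]$, the real multiplication $\tfrac{t}{2^n}x$ coincides with the metric multiplication $\tfrac{t}{2^n}*x$, so Corollary~\ref{BMC:1} gives $\|a\|=\tfrac{t}{2^n}\|x\|$ \emph{exactly}; meanwhile $\|b\|=\tfrac1{2^n}\|tx\|<\tfrac{t}{2^n}\|x\|$ by the strict inequality on $\|tx\|$. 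Hence $\|b\|<\|a\|$ and $a\neq b$ immediately, with no density or continuity argument and no appeal to Lemma~\ref{BML:10}. Your contradiction route reaches the same endpoint, and your identification of $a$ and $b$ as the unique $2^n$-th roots in their respective groups is a clean observation; but the detour through globalising $\|sx\|=|s|\,\|x\|$ along unbounded generators is avoidable once one sees that a single judicious choice of $n$ (forcing $t/2^n\le1$) makes the norm of $a$ computable directly.
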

\begin{proof}
	Let $tx \in \operatorname{Gen}(\IR x)$ be the topological generator with $\|tx\| < |t| \cdot \|x\|$, which exists by Lemma \ref{BML:11}. Since there exists $n \in \mathbb{N}$ such that $2^{n-1} \leq t < 2^n$, we have $\frac{t}{2^n} \in [0,1] \subseteq \IR.$ Then for $a:=\frac{t}{2^n}x \in \IR x$ we have
	$$\|a\| = \|\tfrac{t}{2^n}x\| = \|\tfrac{t}{2^n}*x\| = \tfrac{t}{2^n}\|x\|,$$
	by definition of the real multiplication and Corollary \ref{BMC:1}. Since $\boldsymbol{\cdot}_x: \IR \to G$ is a group homomorphism, we have $2^na = 2^n\frac{t}{2^n} x = tx,$ by the definition of the algebraic multiplication. Now consider element $b := \frac{1}{2^n}(tx) \in H_{tx}$. Witness that
	$$\|b\| = \|\tfrac{1}{2^n}(tx)\| = \|\tfrac{1}{2^n}*(tx)\| = \tfrac{1}{2^n}\|tx\| < \tfrac{t}{2^n}\|x\|,$$
	by definition of the real multiplication, Corollary \ref{BMC:1} and the initial strict inequality on $\|tx\|.$	Lemma \ref{BML:4} ensures that $2^nb = 2^n(\frac{1}{2^n}(tx)) = tx.$ Since $\|b\| < \|a\|$, the elements $a \in \IR x$ and $b \in H_{tx}$ are distinct with $2^na = tx = 2^nb.$
\end{proof}

\begin{proposition}\label{BMP:5}
	For all $x \in G$ the subgroup $\IR x \subseteq G$ is not monothetic.
\end{proposition}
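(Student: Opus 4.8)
The plan is to argue by contradiction and to extract the contradiction from the configuration produced by Lemma~\ref{BML:12}. So I would assume that $\IR x$ is monothetic and fix, by Lemma~\ref{BML:12}, a topological generator $tx\in\operatorname{Gen}(\IR x)$ together with distinct elements $a\in\IR x$ and $b\in H_{tx}$ satisfying $2^n a=tx=2^n b$ for some $n\in\IN$, with $\|b\|<\|a\|$. The whole point will be to show that these two elements are in fact equal, which contradicts $\|b\|<\|a\|$ and hence $a\neq b$.

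The mechanism I would use to force $a=b$ is unique $2$-divisibility. Both $a$ and $b$ are $2^n$-th roots of the single element $tx$: the element $a$ is a $2^n$-th root inside the abelian group $\IR x$, which, being a continuous homomorphic image of $\IR$, is divisible, and, being a subgroup of $G$, has no elements of order $2$ by Proposition~\ref{BMP:1}, hence is uniquely $2$-divisible; the element $b$ is a $2^n$-th root inside $H_{tx}$, which is uniquely $2$-divisible by Proposition~\ref{BMP:2}. Thus $a$ is the unique $2^n$-th root of $tx$ in $\IR x$ and $b$ is the unique $2^n$-th root of $tx$ in $H_{tx}$. Consequently it suffices to place both roots inside one common uniquely $2$-divisible abelian subgroup; the natural target is the inclusion $H_{tx}\subseteq\IR x$, for then $b\in\IR x$ forces $b=a$ by the unique $2$-divisibility of $\IR x$.

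To establish $H_{tx}\subseteq\IR x$ I would use that $tx$ is a topological generator, so that $\IR x=\overline{\langle tx\rangle}$ is a connected closed subgroup of $G$ in which the cyclic group $\langle tx\rangle$ is dense. Since $H_{tx}=\langle H*(tx)\rangle$, it is enough to show that every metric multiple $\frac1{2^k}*(tx)$ lies in $\IR x$. Each such element lies on the geodesic from $0$ to $tx$, and by the definitions of the metric and real multiplications this geodesic is precisely $\{r(tx):r\in[0,1]\}$, i.e. an initial arc of the one-parameter subgroup $\IR(tx)=\boldsymbol{\cdot}_{tx}(\IR)$. Thus the task reduces to showing that the one-parameter subgroup through the generator $tx$ is trapped inside $\IR x=\overline{\langle tx\rangle}$, after which $b\in\IR x$ and the contradiction follows.

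I expect this last reduction to be the main obstacle. In a general metric group a geodesic joining two points of a closed subgroup need not remain in that subgroup — for instance a discrete cyclic subgroup of a strictly convex plane omits the midpoints of its segments — so the containment $\IR(tx)\subseteq\IR x$ cannot be purely formal: it must genuinely exploit both the connectedness of $\IR x$ (which rules out the discrete counterexamples) and the density of $\langle tx\rangle$, presumably by approximating $\frac1{2^k}*(tx)$ by integer multiples $m(tx)$ and using the continuity of the doubling map $g\mapsto 2^k g$ to pin the limit down to the unique $2^k$-th root of $tx$ in the uniquely $2$-divisible group $\IR x$. If this containment should resist proof, the fallback plan is to show directly that $a$ and $b$ commute and then to deduce $2^n(a-b)=0$, whence $a=b$ because, by Proposition~\ref{BMP:1}, $G$ has no element whose order is a power of $2$; but securing the commutativity of a point of $\IR x$ with a geodesic half of $tx$ runs into the same difficulty, so I regard the interaction between the group structure and the (possibly non-abelian) ambient geodesics as the crux of the argument.
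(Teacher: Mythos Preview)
Your overall architecture is right: invoke Lemma~\ref{BML:12}, obtain distinct $a,b$ with $2^na=tx=2^nb$, embed both into a single abelian group with no $2$-torsion, and conclude $a=b$. The gap is that you are chasing the wrong inclusion. You try to force $H_{tx}\subseteq\IR x$, which would require showing that the geodesic midpoints $\frac{1}{2^k}*(tx)$ lie in $\IR x$; as you yourself note, this is exactly the delicate interaction between geodesics and (possibly non-abelian) subgroup structure, and you have no argument for it.

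The paper avoids this entirely by reversing the inclusion. Observe that $tx\in H_{tx}$ trivially, hence $\langle tx\rangle\subseteq H_{tx}$, and therefore $\IR x=\overline{\langle tx\rangle}\subseteq\overline{H_{tx}}$. The closure of an abelian subgroup is abelian, so $\overline{H_{tx}}$ is an abelian subgroup of $G$ containing both $a\in\IR x$ and $b\in H_{tx}$. Now your ``fallback'' is immediate: commutativity gives $2^n(a-b)=0$, and since $G$ has no elements of order $2$ (Proposition~\ref{BMP:1}), induction shows it has no elements of order $2^n$, whence $a=b$. In other words, the common abelian envelope you were looking for is $\overline{H_{tx}}$, not $\IR x$, and the inclusion in that direction is a one-liner from density rather than a geodesic argument.
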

\begin{proof}
	Assume to the contrary that $\IR x$ is monothetic. Lemma \ref{BML:12} ensures that there exists $tx \in \operatorname{Gen}(\IR x)$ and distinct $a \in \IR x, b \in H_{tx}$ with $2^na=tx=2^nb$ for some $n \in \mathbb{N}$. Witness that $H_{tx}$ is a $\IZ[\frac{1}{2}]$-module with the algebraic multiplication by Proposition \ref{BMP:2} and hence $\langle tx \rangle \subseteq H_{tx}.$ This implies $\IR x \subseteq \overline{H_{tx}}$ by density of $\langle tx \rangle$ in $\IR x.$ It is well-known that the closure $\overline{H_{tx}}$ of the abelian group $H_{tx}$ is abelian.
	Since $a \in \IR x \subseteq \overline{H_{tx}}$ and $b \in H_{tx} \subseteq \overline{H_{tx}}$, the equality $2^na = 2^nb$ implies $2^n(a-b) = 0$ by commutativity of $+$ on $\overline{H_{tx}}.$ Since $G$ has no elements of order $2$, it has no elements of order $2^n$ by induction and hence the equality above implies $a = b$, which is a contradiction. 
\end{proof}

\begin{definition}
	A topological group $G$ is called \emph{a one-parameter group} if there exists a topological group homomorphism  $\phi: \IR \to G$ such that $G = \phi(\IR).$
\end{definition}
\begin{corollary}\label{BMC:2}
	For all $x \in G$ the metric subgroup $\IR x \subseteq G$ is isomorphic to $\IR$.
\end{corollary}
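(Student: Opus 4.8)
The plan is to establish that $\boldsymbol{\cdot}_x \colon \IR \to \IR x$ is a topological group isomorphism by invoking a characterization of the real line among one-parameter metrizable topological groups. First I would observe that by Proposition~\ref{BMP:4} the map $\boldsymbol{\cdot}_x$ is a topological group homomorphism and $\IR x = \boldsymbol{\cdot}_x(\IR)$, so $\IR x$ is a one-parameter group in the sense of the final definition. Since $G$ carries a metric and $\IR x$ is a subgroup, $\IR x$ is itself a metrizable topological group. By Proposition~\ref{BMP:5}, $\IR x$ is not monothetic. The abstract title and introduction already flag the intended tool: the real line is characterized as the unique non-monothetic one-parameter metrizable topological group. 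Thus the bulk of the work is packaged into citing (or recalling) that characterization theorem, presumably from \cite{BMM}.

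The key steps, in order, are as follows. First, verify the hypotheses of the characterization: $\IR x$ is a metrizable one-parameter topological group that is not monothetic. Second, apply the characterization to conclude that $\IR x$ is topologically isomorphic to $\IR$. The subtle point is that the characterization yields \emph{some} isomorphism $\IR x \cong \IR$, whereas I want the specific map $\boldsymbol{\cdot}_x$ to be the isomorphism. To bridge this gap, I would argue that $\boldsymbol{\cdot}_x \colon \IR \to \IR x$ is a continuous surjective homomorphism between groups each isomorphic to $\IR$, so it suffices to show it is injective and open. Injectivity follows from strict convexity: if $tx = vx$ with $t \neq v$, then by Lemma~\ref{BML:9} we would have $(t-v)x = 0$ with $t - v \neq 0$, and by considering $\| (t-v)x \|$ together with Corollary~\ref{BMC:1} on the fractional part one derives that $\|x\| = 0$, forcing $x = 0$ (the case $x = 0$ being trivial since then $\IR x = \{0\} \cong \IR$ fails, so $x = 0$ must be excluded or handled separately).

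The main obstacle I expect is precisely this passage from an abstract isomorphism to openness and injectivity of $\boldsymbol{\cdot}_x$ itself. A continuous bijective homomorphism from $\IR$ onto $\IR$ is automatically a homeomorphism, so once injectivity and surjectivity onto $\IR x \cong \IR$ are secured, the open mapping property is free. Therefore the crux reduces to injectivity of $\boldsymbol{\cdot}_x$, which I would extract from the non-monotheticity argument already carried out: indeed, the proof of Proposition~\ref{BMP:5} implicitly rules out the collapse $tx = 0$ for $t \neq 0$, since such a collapse would make $\IR x$ a proper quotient of $\IR$, namely a circle or a point, both of which are monothetic. Hence non-monotheticity of $\IR x$ combined with the classification of quotients of $\IR$ (which are $\IR$, the circle group, or the trivial group) forces $\boldsymbol{\cdot}_x$ to be injective, completing the identification $\IR x \cong \IR$ via $\boldsymbol{\cdot}_x$.
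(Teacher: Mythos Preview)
Your core argument is exactly the paper's: verify that $\IR x$ is a metrizable, one-parameter, non-monothetic topological group (via Propositions~\ref{BMP:4} and~\ref{BMP:5}) and then invoke the characterization of $\IR$ from \cite{BMM}. The additional work you do to upgrade the abstract isomorphism to the statement that $\boldsymbol{\cdot}_x$ itself is the isomorphism is not required by the corollary as stated (the paper only claims $\IR x \cong \IR$), though your observation is well-taken since the paper tacitly uses this stronger fact later in Corollary~\ref{BMC:4}; your kernel argument via the classification of closed subgroups of $\IR$ is the right way to get it, and your flag on the degenerate case $x=0$ is also a fair catch.
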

\begin{proof}
	It is known \cite{BMM} that the real line $\IR$ is the unique (up to topological group isomorphism) one-parameter metrizable topological group, which is not monothetic. $\IR x$ is a metrizable topological group with topology enduced by the translation-invariant metric. Moreover, $\IR x$ is one-parametric by Proposition  \ref{BMP:4} and not monothetic by Proposition \ref{BMP:5}. Hence, $\IR x$ must be isomorphic to $\IR$.
\end{proof}
\begin{corollary}\label{BMC:3}
	For all $x \in G$ the metric subgroup $\IR x$ is closed in $G$. 
\end{corollary}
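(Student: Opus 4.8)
The plan is to deduce the result from Corollary~\ref{BMC:2} together with the classical fact that a locally compact subgroup of a Hausdorff topological group is automatically closed. Since $G$ is a metric group it is Hausdorff, and by Corollary~\ref{BMC:2} the subgroup $\IR x$, carrying the subspace topology induced by the translation-invariant metric, is topologically isomorphic to $\IR$ and hence locally compact. It therefore suffices to prove the general statement that a locally compact subgroup $H$ of a Hausdorff topological group is closed, and then to apply it with $H=\IR x$.

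First I would reduce the claim to showing that $H$ is \emph{open} in its closure $\overline{H}$. Recall that the closure of a subgroup of a topological group is again a subgroup, so $\overline{H}$ is a subgroup of $G$. If $H$ is open in $\overline{H}$, then $H$ is also closed in $\overline{H}$, since the complement of $H$ in $\overline{H}$ is a union of cosets of $H$, each of which is open. As $H$ is by definition dense in $\overline{H}$, a subset that is both closed and dense in $\overline{H}$ must equal $\overline{H}$; thus $H=\overline{H}$ and $H$ is closed in $G$.

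To establish that $H$ is open in $\overline{H}$, I would exploit local compactness at the identity $0$. Using the definition of local compactness for the subspace $H$, choose an open set $U\subseteq G$ with $0\in U$ such that $K:=\overline{U\cap H}^{\,H}$ is compact. Since compactness is absolute and $G$ is Hausdorff, $K$ is compact and hence closed in $G$; in particular $\overline{U\cap H}^{\,G}\subseteq K\subseteq H$. The key point is then the equality $\overline{H}\cap U=H\cap U$: for any $z\in\overline{H}\cap U$, the openness of $U$ forces every neighborhood of $z$ to meet $U\cap H$, so $z\in\overline{U\cap H}^{\,G}\subseteq K\subseteq H$, whence $z\in H\cap U$; the reverse inclusion is trivial. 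Thus $\overline{H}\cap U$ is a neighborhood of $0$ in $\overline{H}$ contained in $H$, so $0$ is interior to $H$ in $\overline{H}$. Translating by an arbitrary element of $H$ (which maps $\overline{H}$ homeomorphically onto itself and $H$ onto $H$) shows that every point of $H$ is interior, i.e.\ $H$ is open in $\overline{H}$, as required.

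The main obstacle is precisely this last step: verifying that local compactness forces $H$ to be locally closed, in the sense of being open in its closure. Everything else is formal group-theoretic bookkeeping. The delicate part rests on two facts about the ambient space, namely the absoluteness of compactness and the Hausdorffness of $G$, which together guarantee that the compact neighborhood $K$ of the identity is closed in $G$, thereby trapping all points of $\overline{H}$ near $0$ inside $H$.
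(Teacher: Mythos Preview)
Your proof is correct and follows exactly the same strategy as the paper: deduce local compactness of $\IR x$ from Corollary~\ref{BMC:2} and then invoke the classical fact that a locally compact subgroup of a Hausdorff topological group is closed. The only difference is that the paper simply cites this fact (to \cite{GZ}), whereas you supply a complete and correct proof of it.
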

\begin{proof}
	Corollary \ref{BMC:2} ensures that the metric subgroup $\IR x$ is locally compact. It is well-known (see e.g. \cite{GZ}) that locally compact subgroups of topological groups are closed.
\end{proof}
\begin{corollary}\label{BMC:4}
	If $K \subseteq G$ is a compact subgroup of $G$, then $K = \{0\}.$
\end{corollary}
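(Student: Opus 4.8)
The plan is to fix an arbitrary $x \in K$ and deduce that $x = 0$; since $x$ is arbitrary, this yields $K = \{0\}$. The whole argument rests on the two preceding corollaries: by Corollary~\ref{BMC:2} the map $\boldsymbol{\cdot}_x \colon \IR \to \IR x$ is a topological group isomorphism, and by Corollary~\ref{BMC:3} the subgroup $\IR x$ is closed in $G$.

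First I would identify the cyclic subgroup generated by $x$. For $n \in \IZ$ the real multiplication gives $nx = [n]\cdot x + \{n\}*x = n\cdot x$, so $\langle x\rangle = \{n\cdot x : n\in\IZ\} = \boldsymbol{\cdot}_x(\IZ)$. Under the isomorphism $\boldsymbol{\cdot}_x$ this subgroup corresponds to $\IZ\subseteq\IR$. Assuming $x\neq 0$, injectivity of $\boldsymbol{\cdot}_x$ makes the elements $n\cdot x$ pairwise distinct, so $\langle x\rangle$ is infinite; moreover $\IZ$ is closed and discrete in $\IR$, hence $\langle x\rangle$ is closed and discrete in $\IR x$.

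Next I would transfer this to $G$. Because $\IR x$ is closed in $G$ and carries the subspace topology, the topology that $G$ induces on $\langle x\rangle$ coincides with the one transported from $\IR$; therefore $\langle x\rangle$ is discrete in $G$ and its closure satisfies $\overline{\langle x\rangle} = \langle x\rangle$. As a compact subset of the Hausdorff space $G$, the subgroup $K$ is closed, so from $\langle x\rangle\subseteq K$ we obtain $\overline{\langle x\rangle} = \langle x\rangle\subseteq K$. Thus $\langle x\rangle$ is a closed subset of the compact set $K$ and is itself compact. But an infinite discrete space is never compact (its cover by singletons admits no finite subcover), so $\langle x\rangle$ must be finite. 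This contradicts its being infinite, forcing $x = 0$.

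The step I expect to be the main (though modest) obstacle is the transfer of discreteness from $\IR x$ to the ambient group $G$: a priori $\langle x\rangle$ is only known to be discrete inside $\IR x$, and to conclude that it is genuinely discrete, and closed, as a subset of $G$ one must invoke that $\IR x$ is closed in $G$ (Corollary~\ref{BMC:3}). Once that is in place, the clash between compactness of $K$ and the infinitude of the discrete group $\langle x\rangle \cong \IZ$ closes the argument.
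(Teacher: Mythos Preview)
Your proof is correct and follows essentially the same route as the paper: both arguments identify $\langle x\rangle$ with the image of $\IZ$ under the isomorphism $\boldsymbol{\cdot}_x$, use Corollary~\ref{BMC:3} to pass closedness from $\IR x$ to $G$, and then derive a contradiction from the fact that a closed subset of the compact group $K$ must be compact while $\IZ$ is not. The only cosmetic difference is that you phrase the final contradiction via discreteness (an infinite discrete set cannot be compact), whereas the paper simply observes that $\IZ x$ is the image of the non-compact space $\IZ$ under a homeomorphism.
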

\begin{proof}
	Suppose to the contrary that there exists $x \in K$ such that $x \not = 0.$ Since $ \langle x \rangle = \{nx : n \in \IZ\} = \IZ x := \boldsymbol{\cdot}_x(\IZ) \subseteq \IR x$ is an image of $\mathbb{Z}$ under the topological group isomorphism $\boldsymbol{\cdot}_x: \mathbb{R} \to \IR x$, it is a closed subgroup of $\IR x$.  Corollary \ref{BMC:3} ensures that $\IR x$ is closed in $G$, which implies that $\IZ x$ is closed in $G$. Since $\IZ x = \langle x \rangle \subseteq K$, the group $\IZ x$ is a closed subgroup of a compact group $K$ and hence $\IZ x$ is itself compact. This is a contradiction since $\IZ x$ is an image of non-compact $\IZ \subseteq \IR$ under the topological group isomorphism $\boldsymbol{\cdot}_x: \IR \to \IR x.$
\end{proof}

\section{Proof of Theorem \ref{BMT:1}}\label{s:proof-main}
To prove the main result, we plan to use the result \cite{IWS} of Iwasawa, which is the following theorem.
\begin{theorem}\label{IWS}
	A connected locally compact topological group contains a compact invariant neighborhood of the identity if and only if the group is compact-by-abelian, i.e., contains a compact normal subgroup whose quotient group is abelian.
\end{theorem}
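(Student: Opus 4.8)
The plan is to prove the two implications separately, treating the reverse implication as routine and concentrating the effort on the forward one. For the reverse direction, suppose $G$ contains a compact normal subgroup $K$ with $G/K$ abelian. The quotient homomorphism $q\colon G\to G/K$ is open (as for all topological groups) and proper, since its fibres are the cosets of the compact group $K$, so preimages of compact sets are compact. Being locally compact, $G/K$ has a compact neighborhood $W$ of its identity, and $W$ is automatically conjugation-invariant because $G/K$ is abelian. Then $q^{-1}(W)$ is compact by properness of $q$, is a neighborhood of $e$ by openness of $q$, and is invariant, since $g\,q^{-1}(W)\,g^{-1}=q^{-1}\big(q(g)\,W\,q(g)^{-1}\big)=q^{-1}(W)$ for every $g\in G$. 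This yields the required compact invariant neighborhood.

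For the forward direction let $V$ be a compact invariant neighborhood of $e$, so that $gVg^{-1}=V$ for all $g$. First I would extract a conjugacy condition: every $v\in V$ then has conjugacy class contained in $V$, hence relatively compact; since the conjugacy class of a product lies in the (compact) product of the closed conjugacy classes of the factors, and similarly for inverses, the set of elements with relatively compact conjugacy class is a subgroup containing the neighborhood $V$, and therefore equals the connected group $G$ (the group is $[\mathrm{FC}]^-$). Next I would reduce to Lie groups: by the Gleason--Yamabe solution of Hilbert's fifth problem a connected locally compact group admits a compact normal subgroup $K$ with $L\defeq G/K$ a Lie group, and $q_K(V)$ is again a compact invariant neighborhood in $L$, so $L$ inherits the $[\mathrm{FC}]^-$ property. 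In the Lie case I would argue that relative compactness of all adjoint orbits $\{\operatorname{Ad}(g)X:g\in L\}$ in the Lie algebra $\mathfrak l$ forces $\operatorname{Ad}(L)$ to be relatively compact in $\operatorname{GL}(\mathfrak l)$ (a subgroup of $\operatorname{GL}(\mathfrak l)$ all of whose orbits are bounded is itself bounded, since the columns of its matrices are the orbits of basis vectors); averaging an inner product over $\overline{\operatorname{Ad}(L)}$ then makes each $\operatorname{ad}(X)$ skew-symmetric, so $\mathfrak l$ is of compact type and splits as $\mathfrak l=\mathfrak z\oplus[\mathfrak l,\mathfrak l]$ with $\mathfrak z$ the center and $[\mathfrak l,\mathfrak l]$ semisimple of compact type. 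By Weyl's theorem the analytic subgroup $S\subseteq L$ with algebra $[\mathfrak l,\mathfrak l]$ is compact, hence a closed normal subgroup, and $L/S$, having abelian Lie algebra $\mathfrak z$, is abelian. Finally I would pull this back: $C\defeq q_K^{-1}(S)$ is normal, is compact (a proper preimage of the compact $S$), and satisfies $G/C\cong L/S$ abelian, so $G$ is compact-by-abelian.

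The main obstacle is the heart of the Lie step: passing from relative compactness of conjugacy classes in the group to relative compactness of the adjoint orbits in $\mathfrak l$. The difficulty is that $\exp\colon\mathfrak l\to L$ is only a local diffeomorphism and is not proper, so the containment $\{\exp(\operatorname{Ad}(g)X):g\in L\}\subseteq V$ does not by itself bound $\{\operatorname{Ad}(g)X:g\in L\}$; controlling the eigendirections of the operators $\operatorname{Ad}(g)$ with eigenvalue of modulus different from $1$ is precisely where growth must be excluded, using that an expanding real eigendirection generates a subgroup on which $\exp$ is unbounded so that $\exp$ of a divergent sequence there cannot remain inside the compact set $V$. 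I expect to resolve this either by such an eigenvalue analysis or, more economically, by invoking the standard structural equivalence for connected Lie groups that relatively compact conjugacy classes, relative compactness of $\operatorname{Ad}(L)$, and compactness of $\overline{[L,L]}$ are mutually equivalent, which is exactly the content needed to close the argument.
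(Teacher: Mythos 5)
This statement is not proved in the paper at all: it is Iwasawa's classical theorem, quoted verbatim with a reference to his 1951 Annals paper, so there is no internal proof to compare against and your attempt has to stand on its own as a complete argument. Much of it does: the reverse implication is correct (properness of the quotient map by a compact normal subgroup, openness, and the conjugation computation are all fine); so is the observation that a compact invariant neighborhood forces every conjugacy class to be relatively compact (the elements with relatively compact class form a subgroup containing a neighborhood of $0$, hence an open subgroup, hence all of the connected group); so are the Gleason--Yamabe reduction to a Lie quotient $L$, the averaging argument once $\operatorname{Ad}(L)$ is known to be relatively compact, the splitting $\mathfrak{l}=\mathfrak{z}\oplus[\mathfrak{l},\mathfrak{l}]$, Weyl's theorem, and the pull-back of the compact semisimple part.

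The genuine gap is exactly the step you yourself flag as the main obstacle, and neither of your two proposed escapes closes it. The eigenvalue sketch fails as stated: from a divergent sequence $\operatorname{Ad}(g_n)X$ one can pass to normalized limit directions $Y=\lim \operatorname{Ad}(g_n)X/\|\operatorname{Ad}(g_n)X\|$ and conclude only that $\exp(\IR Y)$ lies in a fixed compact invariant set (e.g.\ a power $V^m$ of the invariant neighborhood), i.e.\ that these limit directions generate one-parameter subgroups with compact closure --- which is no contradiction, since $\exp$ happily maps divergent sequences of the Lie algebra into compact tori; excluding unbounded adjoint orbits really requires structure theory (a Levi decomposition, ruling out noncompact semisimple factors, and controlling the action on the radical), which is where the actual content of Iwasawa's theorem lives. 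Your alternative --- invoking the ``standard structural equivalence'' that for connected Lie groups relatively compact conjugacy classes, relative compactness of $\operatorname{Ad}(L)$, and compactness of $\overline{[L,L]}$ coincide --- is invoking a Grosser--Moskowitz-type theorem that is at least as deep as, and historically descends from, the very theorem you are proving; as a citation it is legitimate but then your text is a reduction of Iwasawa's theorem to an equivalent known theorem, not a proof of it. To make the argument self-contained you would need to actually prove the implication from $[\mathrm{FC}]^-$ to relative compactness of $\operatorname{Ad}(L)$ for connected Lie groups, and that proof is missing.
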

Now we finally present the proof of Theorem \ref{BMT:1}. Let us recall that it states the following: {\em every locally compact strictly convex metric group $(G, +, 0, d)$ is abelian.}
\begin{proof}
	The strictly convex metric space $G$ is geodesic by \cite{BM1} and hence connected. Since $G$ is locally compact we can pick an open neighborhood $U$ of identity $0 \in G$ such that $\overline{U}$ is compact in $G$. Let $B \subseteq U$ be a closed ball with center in $0 \in G$. Since $B \subseteq \overline{U}$ is closed, $B$ is compact in $G$. Since the metric $d$ is  translation invariant, the ball $B$ is a compact invariant neighborhood of the identity.  Then $G$ contains a compact normal subgroup $K$ whose quotient group is abelian by Theorem \ref{IWS}. Corollary \ref{BMC:4} ensures that $K = \{0\}$. Hence $G/K = G$ is abelian.
\end{proof}
To conclude this section, we shall consider the following corollary concerning compactly finite-dimensional metric groups.
	\begin{definition}
	A metric group $G$ is called {\em compactly finite-dimensional} if 
	$$\operatorname{co-dim}(G) = \operatorname{sup}\{\operatorname{dim}(K): \text{$K$ is compact subspace of $G$}\}$$
	is finite.
\end{definition}
\begin{corollary}
	Every compactly finite-dimensional strictly convex metric group is abelian.
\end{corollary}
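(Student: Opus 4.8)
The plan is to reduce the statement to the already proven Theorem~\ref{BMT:1} by showing that a compactly finite-dimensional strictly convex metric group $G$ is automatically locally compact; once a compact neighborhood of $0$ is produced, Theorem~\ref{BMT:1} immediately gives that $G$ is abelian. Since a locally compact strictly convex metric group is, by Theorem~\ref{BMT:1} and \cite{BM1}, a finite-dimensional normed space and hence compactly finite-dimensional, this reduction loses no generality: the real content is the implication \emph{compactly finite-dimensional $\Rightarrow$ locally compact}, so the whole difficulty is concentrated in establishing local compactness.

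First I would record the available geometric input. By \cite{BM1} the group $G$ is geodesic, hence connected and locally connected, and by Proposition~\ref{BMP:4} each coordinate map $t\mapsto tx_i$ is continuous, so for every finite family $x_1,\dots,x_m\in G$ the map $[0,1]^m\to G$, $(t_1,\dots,t_m)\mapsto t_1x_1+\dots+t_mx_m$, is continuous. Its image, the parallelepiped $Q(x_1,\dots,x_m)$, is therefore a compact subspace of $G$, so that $\operatorname{dim}Q(x_1,\dots,x_m)\le\operatorname{co-dim}(G)=:n$ for every such family. Likewise, writing $w\in B[0,r]\setminus\{0\}$ as $w=\tfrac{\|w\|}{r}z$ with $z$ in the sphere $S_r=\{z:\|z\|=r\}$ exhibits $B[0,r]$ as a continuous image of $[0,1]\times S_r$, so compactness of one sphere would already force compactness of the corresponding ball.

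The heart of the argument, and the step I expect to be the main obstacle, is to convert the uniform bound $n$ on the dimensions of these compact parallelepipeds into compactness of a closed ball. I would argue by contradiction: if no ball is compact, then, working in the complete setting, a ball fails to be totally bounded and carries a $\varepsilon$-separated infinite set $\{w_i\}$. The task is then to upgrade such a merely separated set to a family of \emph{independent} geodesic directions for which the box maps $[0,1]^m\to Q(w_{i_1},\dots,w_{i_m})$ are topological embeddings, so that $\operatorname{dim}Q(w_{i_1},\dots,w_{i_m})=m$ can be made to exceed $n$, contradicting the bound above. Passing from separated points to genuinely independent directions is exactly where strict convexity and the isometric copies $\IR x\cong\IR$ supplied by Corollary~\ref{BMC:2} must do their work; in a normed space this is the classical step from an infinite bounded separated set to a linearly independent one, and the geodesic structure is what should make the analogue survive in the present non-linear setting. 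I expect that arranging injectivity of these box maps, and not the dimension bookkeeping, will be the decisive difficulty.

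Two technical points must be handled. First, completeness: since $G$ embeds as a subgroup of its metric completion, it suffices to treat the complete case, provided one first checks that the completion is again strictly convex and still compactly finite-dimensional — a verification that itself needs care and which I would isolate as a preliminary lemma. Second, once some ball $B[0,r]$ is shown to be compact, translation invariance of the metric makes each translate $c+B[0,r]$ a compact neighborhood of $c$, so $G$ is locally compact and Theorem~\ref{BMT:1} completes the proof.
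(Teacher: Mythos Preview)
Your reduction strategy is exactly the paper's: show that $G$ is locally compact and then invoke Theorem~\ref{BMT:1}. The paper, however, does not attempt any direct geometric argument. It simply notes that $G$, being geodesic, is path-connected, and then cites the result of \cite{BZ} that every compactly finite-dimensional path-connected topological group is locally compact. That one citation replaces everything after your first paragraph.

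Your proposed direct route has a genuine gap precisely where you locate it: upgrading an $\varepsilon$-separated set to a family for which the box map $(t_1,\dots,t_m)\mapsto t_1w_{i_1}+\cdots+t_mw_{i_m}$ is a topological embedding. At this point $G$ is not yet known to be abelian, so such a sum depends on the chosen ordering and there is no workable notion of linear independence to fall back on; the normed-space analogy breaks down exactly here. Strict convexity and the isomorphisms $\IR x\cong\IR$ from Corollary~\ref{BMC:2} control individual one-dimensional geodesics but give no obvious mechanism for multi-parameter injectivity, so this step would require substantial new ideas --- effectively re-proving the theorem of \cite{BZ}, which is itself non-trivial and does not proceed via parallelepipeds. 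The completeness detour you flag is a second genuine obstacle: preservation of compact finite-dimensionality under metric completion is not automatic. The paper's approach avoids both difficulties by importing the structural result wholesale.
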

\begin{proof}
	It is known \cite{BZ} that every compactly finite-dimensional path-connected topological group is locally-compact and hence satisfies Theorem \ref{BMT:1}.
\end{proof}

\end{document}